\documentclass[reqno,10pt]{article}
\usepackage[a4paper,
  left=18mm,
  right=18mm,
  top=18mm,
  bottom=22mm]{geometry}
  \usepackage[T1]{fontenc}
\usepackage{lmodern}
\usepackage{mathtools}
\usepackage{amsmath,amssymb,amsthm,mathrsfs}
\usepackage{geometry}
\usepackage{hyperref}
\usepackage{enumitem}
\usepackage{microtype} 
\hypersetup{colorlinks=true,linkcolor=blue,citecolor=blue,urlcolor=blue}

\theoremstyle{plain}
\newtheorem{theorem}{Theorem}[section]
\newtheorem{proposition}[theorem]{Proposition}
\newtheorem{lemma}[theorem]{Lemma}
\newtheorem{corollary}[theorem]{Corollary}

\theoremstyle{definition}
\newtheorem{definition}[theorem]{Definition}
\newtheorem{remark}[theorem]{Remark}

\newcommand{\T}{\mathbb{T}}

\newcommand{\Hs}{\mathcal{H}}

\def\bega{\begin{aligned}}
\def\enda{\end{aligned}}
\def\R{\mathbb{R}^2}

\def\R{\mathbb{R}}

\def\bcase{\begin{cases}}
\def\ecase{\end{cases}}

\def\bmx{\begin{bmatrix}}
\def\emx{\end{bmatrix}}

\def\bitem{\begin{itemize}}
\def\eitem{\end{itemize}}

\def\w{\omega}

\def\beq{\begin{equation}}
\def\eeq{\end{equation}}
\def\dist{\text{dist}}

\title{Regularization for Multi-Phase 2D Euler Equations via Competing Transport Markers}

\author{%
Trinh T. Nguyen\\[0.3em]
\small School of Mathematical Sciences, Shanghai Jiao Tong University\\
\small \texttt{q0004349@sjtu.edu.cn}
}

\date{}

\begin{document}
\maketitle
\begin{abstract}
We introduce a novel regularization framework for the two-dimensional incompressible Euler equation that exactly preserves the transport structure of multi-phase vorticity fields. The key step is a reformulation of multi-phase vortex patch data in terms of a finite family of passively advected scalar marker functions: at each point, the local vorticity is determined by a smooth, pointwise selection rule arising from competition among these markers. The scheme introduces no spatial diffusion or mollification; all regularization originates solely from the marker selection mechanism. As the sharpness parameter $\beta\to\infty$, we prove uniform convergence of the transported marker functions on finite time intervals. Moreover, under a geometric nondegeneracy condition on the underlying Euler interface network, we establish Hausdorff convergence of the evolving interfacial structures and exponential-in-$\beta$ pointwise convergence of the regularized vorticity away from the tie sets to the corresponding sharp multi-phase vortex patch solution. Finally, we show that the loss of pointwise convergence coincides precisely with the onset of geometric degeneracy in the Euler interface dynamics.\end{abstract}


\section{Introduction}
\noindent
The evolution of piecewise--constant vorticity fields, commonly referred to as
\emph{vortex patches}, is a classical and central topic in the analysis of the
two--dimensional incompressible Euler equation.
In its simplest form, a vortex patch consists of a region of constant vorticity
separated from its complement by a sharp interface.
Beyond their mathematical significance, such patch--type configurations serve as
idealized models for coherent vortical structures observed in a wide range of
inviscid flows
\cite{ConstantinFoias1988,Burbea1982,MarchioroPulvirenti1994,MarchioroPulvirenti1993,Saffman1992,Lamb1932,childress2009,DrivasElgindiJeong2024,DrivasElgindi2023EMS,BedrossianVicol2022}. More complex dynamics arise when the vorticity field consists of several distinct
patches with different vorticity levels.
Long-lived cellular vortex configurations have been observed in both numerical
simulations and laboratory experiments of nearly inviscid two-dimensional flows.
In particular, experiments have demonstrated the spontaneous formation of
so-called \emph{vortex crystals}, consisting of a finite number of coherent vortices
arranged in regular lattice-like patterns and persisting over extremely long time
intervals \cite{PhysRevLett.75.3277}.
These states arise dynamically from turbulent initial conditions and are
characterized by sharply separated regions of nearly uniform vorticity embedded
in a weaker background. Such observations motivate the mathematical study of multi-phase vorticity
configurations and the evolution of their interfaces within the incompressible
Euler framework
\cite{SiegelmanYoungIngersoll2022,hussain1986,dileoni2020,SchecterDubin1999,ArefNewtonStremlerTokiedaVainchtein2003}.~\\~\\
The dynamics of vortex patches with regular boundaries have been extensively
investigated over the past decades, beginning with the foundational works
\cite{BertozziConstantin1993,Chemin93,Chemin95} and continuing through a broad
literature on contour dynamics and boundary regularity.
See also
\cite{buttke1989, HmidiMateuVerdera2013,ElgindiJeong2023,KiselevLuo2023,ZabuskyHughesRoberts1979,DrivasElgindiLa2023,elgindi2025cuspformationvortexpatches,park2022uniform,ElgindiHuang2025BahouriChemin,HuangZhang2025HolderBahouriChemin}
and the references therein. These results establish that when the initial vorticity consists of one or several
patches with $C^{1,\alpha}$ boundaries, the interfaces are transported by the flow
and remain regular for all time within the Yudovich class.
The evolution may therefore be described entirely in terms of the motion of the
patch boundaries, leading to a closed contour--dynamics formulation.
In this setting, the vorticity is represented as a finite sum of characteristic
functions whose supports evolve under the Euler flow, providing a precise description of interface dynamics.~\\~\\
On the other hand, a complementary challenge arises when one seeks to
\emph{regularize} interface problems—whether for analytical approximation,
numerical computation, or for establishing rigorous connections between inviscid
dynamics and more microscopic descriptions.
Such lack-of-regularity issues arise prominently in aspects of Hilbert’s sixth
problem \cite{hilbert1900problems,Saint-Raymond-book,bardos-golse-levermore1,bardos-golse-levermore2,bardos-golse-levermore4,bardos-golse-levermore3}, which concerns the passage from kinetic or molecular models to continuum fluid equations. In this context, Hilbert expansions and related asymptotic procedures necessitate regularization frameworks that respect the underlying geometric structure of the flow \cite{guo2006boltzmann,JangKimAPDE,YuCPAM,Speck-Strain,KimNguyen,KimNguyen2025Prandtl}.
Standard regularization techniques, such as convolution with a mollifier,
typically introduce commutator errors that disrupt the intrinsic transport
structure of the Euler equation: the regularized vorticity is no longer expressed
in terms of advected quantities, and the resulting smoothing is not aligned with
the evolving interfaces.
As a consequence, the regularized dynamics become difficult to relate to the
original interface motion, and quantitative control of sharp--interface limits
becomes unclear. A different approach to regularization proceeds through the
vanishing--viscosity limit of the Navier--Stokes equations.
Substantial progress has been made along these lines, including rigorous
derivations of inviscid dynamics and stability results for Yudovich solutions and
related configurations; see, for instance,
\cite{ConstantinWu95,ConstantinWu96,Chemin96,Masmoudi07,Sueur15,PeterTarekTheo}
and the references therein.
Closely related vanishing--viscosity and singular--limit results have also been
obtained for other vortex models, including point vortices
\cite{MarchioroPulvirenti1993,Marchioro1990OnTV,CeciSeis2024,Gallay2010},
vortex--wave systems \cite{2NVW,bjorland2011}, and vortex rings
\cite{benedetto2000,brunelli2011,cao2021,gallay_sverak2024}.
From a physical perspective, diffusion--based regularization is both natural and
fundamental, and its analysis lies at the heart of singular perturbation
theory in fluid mechanics
\cite{prandtl1904,lagerstrom1972,wood1957,oleinik1999,kim_childress2001,2N,TN-critical,2N-ext,DrivasIyerNguyen2024FeynmanLagerstrom,BNTT,Toan-Grenier-APDE,Grenier-CPAM,GN2,GreGuoToanAM,GGN3,SammartinoCaflisch2,GVMM,Mae,fei2023prandtl,KVW,KNVW,ConstantinKuka,Wang,Maz4}. However, an additional layer of subtlety arises in \emph{multi--patch
configurations}, where more than two vorticity levels are present.
In such settings, the flow decomposes into several regions separated by a network
of interfaces that may form cellular patterns and, in principle, meet at
junctions.
While viscosity regularizes the equations, it simultaneously smooths the 
interfaces and corners that characterize the inviscid geometry. 
Consequently, even when convergence to an inviscid solution can be established in $L^p$, identifying the limiting evolution of the
interface network remains unclear.
Such multi--interface configurations arise naturally as coarse--grained
descriptions of organized vorticity fields on periodic domains and in idealized
models of vortex crystals or coherent states. ~\\~\\
A natural and intriguing question is whether there exists an
\emph{interpolating viewpoint} between these two perspectives—one in which
interface geometry remains meaningful while the vorticity is described by a
regularized, smooth field.
\paragraph{Question.}
Can one construct a regularization of the two--dimensional incompressible Euler
equation in genuinely multi--phase vorticity configurations that preserves the
exact transport structure of the inviscid flow and admits a coherent geometric
description of evolving interfaces and junctions? More precisely, can multi--phase
vortex patch data be regularized in a manner that respects the underlying phase
partition and faithfully tracks the transported interface geometry, and can the
breakdown of pointwise convergence of such a regularization be characterized
purely in geometric terms through degenerations of the Euler interface network?~\\~\\
In this paper, we propose a regularization of the two--dimensional incompressible
Euler equation that operates directly in physical space and is tailored to
genuinely multi--phase vorticity configurations. The construction introduces no
spatial mollification or diffusion. Instead of prescribing the vorticity through
a fixed partition of the domain and explicitly tracking the evolution of its
interfaces, we introduce a finite family of scalar markers that are
passively transported by the flow, and determine the local vorticity through a
smooth gating rule that produces a convex mixture of vorticity phases, with
weights depending on the relative magnitudes of the markers. The regularization is governed by a sharpness parameter $\beta>0$. A central
feature of the construction is a structural invariance: for each fixed $\beta$,
the vorticity $\w^\beta$ solves the Euler equation and retains for all time the functional form
\[
\omega^\beta(t,\cdot)
=
F_\beta\bigl(\varphi_1^\beta(t,\cdot),\dots,\varphi_K^\beta(t,\cdot)\bigr),
\qquad
(\partial_t+u^\beta\cdot\nabla)\varphi_k^\beta=0,
\]
so that all diffuse interfacial geometry is encoded entirely by scalar quantities
transported by the flow. This exact transport representation is preserved
throughout the evolution and is precisely the structure that is typically
destroyed by standard regularization procedures, such as convolution with a
mollifier. For suitable choices of the marker functions, the construction yields a
$C^\infty$ regularization of cellular vorticity patterns, including those arising
in idealized models of vortex crystals.
\paragraph{Organization of the paper.}
In Section~\ref{multi}, we formulate the two--dimensional incompressible Euler
equation in vorticity form \eqref{eq:euler} and introduce multi--phase vortex patch
data \eqref{ran-4}. We then reformulate the multi--phase patch data in a form
adapted to Euler transport through a \emph{competitive--marker viewpoint}: a finite
family of marker functions $\{\varphi_k^{\mathrm{in}}\}_{k=1}^K$ is transported by
the flow and induces, via pointwise maximization, a partition of the domain into
regions $\Omega_k$, with interfaces described by the associated tie sets. This
reformulation encodes the evolving multi--phase geometry directly in terms of
transported scalar fields. Based on this representation, we define a regularized
initial vorticity as a convex mixture of phases \eqref{initial}. In Section~\ref{const}, we establish closure of the regularized ansatz under Euler
evolution (Theorem~\ref{close1} and Proposition~\ref{conserve}). In
Section~\ref{mark-conv}, we formulate geometric nondegeneracy assumptions on the tie
sets and prove their persistence under transport. We also derive an $L^1$
approximation of the initial patch data by the regularized vorticity
$\omega_0^\beta$ (Proposition~\ref{prop:expG_patch_approx}) and prove uniform
convergence of the transported marker functions on any finite time interval
$[0,T]$ (Theorem~\ref{conv-of-markers}). In Section~\ref{sec-hau}, we study the Hausdorff convergence of the evolving
interfaces, establishing convergence up to a maximal time $T_\star$, defined as
the first time at which the Euler interface network becomes geometrically
degenerate (Theorems~\ref{hau1} and~\ref{hau2}). Finally, we derive sharp pointwise
vorticity estimates away from the tie sets, with errors that decay exponentially in
$\beta$ (Theorems~\ref{1patch} and~\ref{2patch}), and show that the loss of pointwise
convergence coincides precisely with the onset of singular geometry in the
underlying Euler interface dynamics.
\section{Multi-Phase Partitions: A Competitive-Marker Viewpoint}\label{multi}
We consider the incompressible Euler equation in vorticity form
\begin{equation}\label{eq:euler}
\partial_t \omega + u\cdot \nabla \omega = 0,
\qquad
u = \nabla^\perp \psi,
\qquad
-\Delta \psi = \omega - \langle \omega \rangle,
\end{equation}
posed on the two-dimensional torus $\T^2$. Here $\langle\omega\rangle$ denotes the spatial mean of $\omega$.
The initial vorticity is defined by
\beq\label{ran-4}
\omega_0(x)
=
\sum_{k=1}^K c_k\,\mathbf{1}_{\Omega_k}(x),
\eeq
where $c_k\in\R$ denotes the vorticity level assigned to the phase
$\Omega_k\subset \T^2$, and $K\ge 2$ is a natural number.
Yudovich theory yields global existence and uniqueness of solutions in the Yudovich class \cite{yudovich1963,Yudovich1995,majda2002}.
\paragraph{Vorticity phases as competing regions.}
Rather than prescribing the vorticity through a fixed partition of the domain and
explicitly tracking the subsequent motion of its interfaces, we adopt a different
geometric viewpoint. We represent the domain by a finite family of \emph{competing
regions} $\Omega_k$, each associated with a prescribed vorticity level and
determined pointwise by a collection of scalar marker functions. At each spatial
location, the vorticity is selected according to the dominant marker value, so that
the markers encode the local prevalence of the corresponding phase.
 In this formulation, interfacial boundaries are not imposed \emph{a priori} but
emerge dynamically as the loci where two or more marker functions are in balance;
we refer to these sets as \emph{tie sets}. Since the marker functions are transported
by the Euler flow, the induced phase partition and its interfaces evolve coherently
with the underlying fluid motion. This reformulation replaces explicit interface
tracking with a competition-based description in which interfacial geometry is
encoded implicitly through transported scalar fields.
\paragraph{Domain partitions via markers.}
Let $K\ge 2$ and $\alpha\in(0,1)$, and let
$\{\varphi_k^{\mathrm{in}}\}_{k=1}^K \subset C^{1,\alpha}(\T^2)$ be a collection of
scalar functions, which we refer to as \emph{markers}.
At each point $x\in\T^2$, the values
$\varphi_1^{\mathrm{in}}(x),\dots,\varphi_K^{\mathrm{in}}(x)$ are compared, and $x$ is
assigned to the phase $\Omega_k$ corresponding to the largest marker value.
Accordingly, the marker functions may be chosen so that the torus is partitioned
into $K$ disjoint open regions
\begin{equation}\label{domain}
\Omega_k
:=
\bigl\{x\in\T^2:\ \varphi_k^{\mathrm{in}}(x)>\varphi_j^{\mathrm{in}}(x)
\ \text{for all } j\neq k\bigr\},
\qquad k=1,\dots,K,
\end{equation}
whose union covers $\T^2$ up to a set of measure zero. In this representation, each phase $\Omega_k$ is characterized by the dominance of
the corresponding marker $\varphi_k^{\mathrm{in}}$. The boundaries between phases
are given by the \emph{tie sets}
\[
\Gamma_{ij}
:=
\bigl\{x\in\T^2:\ \varphi_i^{\mathrm{in}}(x)=\varphi_j^{\mathrm{in}}(x)\ge
\varphi_\ell^{\mathrm{in}}(x)\ \text{for all }\ell\bigr\},
\]
which form a network of interfacial curves separating the regions.
When $K\ge 3$, these interfaces may meet at junctions where three or more marker
values coincide.
\paragraph{Vorticity regularization as a competitive weighting.}
Given a sharpness parameter $\beta>0$, we define the weights
\begin{equation}\label{eq:expG}
\pi_{k}^\beta(x)=\frac{e^{\beta \varphi_{k}^{\text{in}}(x)}}{\sum_{j=1}^K e^{\beta \varphi_j^{\text{in}}(x)}},
\qquad k=1,\dots,K.
\end{equation}
We regularize the initial vorticity \eqref{ran-4} by
\beq\label{initial}
\omega_0^\beta(x)=\sum_{k=1}^K c_k \,\pi_k^\beta(x).
\eeq
This construction admits a \textbf{mixture of colors} interpretation.
The constants $\{c_k\}_{k=1}^K$ may be viewed as a finite palette of colors (or
vorticity levels), while the markers $\{\varphi_k^{\text{in}}\}_{k=1}^K$ encode, at each
location $x\in\T^2$, how strongly the point ``prefers'' each color.
Given the marker scores $\{\varphi_1^{\text{in}}(x),\dots,\varphi_K^{\text{in}}(x)\}$, the point $x$ then forms a
\textbf{convex mixture} of the available colors according to the weights
\eqref{eq:expG}.
Equivalently, $x$ assigns probability $\pi_k^\beta(x)$ to the color $c_k$ and takes
the averaged vorticity value
\[
\omega_0^\beta(x)=\sum_{k=1}^K c_k\,\pi_k^\beta(x).
\]
The parameter $\beta>0$ controls how decisive this choice is.
Indeed, for any $k,\ell$,
\[
\frac{\pi_k^\beta(x)}{\pi_\ell^\beta(x)}
=
\exp\!\bigl(\beta(\varphi_k^{\text{in}}(x)-\varphi_\ell^{\text{in}}(x))\bigr),
\]
so ratios in marker scores are exponentially amplified as $\beta$ increases.
Thus, when one score is strictly larger than the others, the corresponding weight
approaches $1$ while the remaining weights become exponentially small.
In the singular limit $\beta\to\infty$, this mixture of colors collapses to a sharp
coloring: each point $x$ takes the single color $c_k$ whose score $\varphi_k^{\text{in}}(x)$ is
maximal, and transitions between colors are confined to thin layers near the tie
sets where two or more scores coincide. This initial regularized configuration incorporates the exact interface geometry into the Euler equations.
\paragraph{Comparison with mollifier regularization.}
\begin{itemize}
\item
A standard approach to regularizing vortex patch data is to mollify the initial
vorticity,
\[
\omega_0^\varepsilon := \rho_\varepsilon * \omega_0,
\]
and to solve the Euler equation with initial data $\omega_0^\varepsilon$.
This procedure amounts to a nonlocal Eulerian smoothing: each point is replaced by
an average of nearby values, independently of whether it lies deep inside a phase
or near an interface. As a consequence, sharp boundaries are uniformly smeared, and
the regularized vorticity no longer admits a description in terms of transported
level sets. In particular, the smoothing is not aligned with the evolving interface
geometry.

By contrast, in the present construction \eqref{initial}, the marker functions
$\{\varphi_k^{\mathrm{in}}\}$ are transported exactly by the Euler flow. Regularization
occurs solely through pointwise competition among the markers and is therefore
localized near regions where two or more phases interact. Away from interfaces, a
single marker dominates and the vorticity remains essentially constant. As a result,
the diffuse transition layers are confined to thin neighborhoods of transported tie
sets and remain geometrically aligned with the flow for all times.

\item
If the initial marker functions $\{\varphi_k^{\mathrm{in}}\}$ belong to
$C^{1,\alpha}(\T^2)$, then the regularized initial vorticity \eqref{initial}
inherits the same $C^{1,\alpha}$ regularity. In this sense, the construction yields
a \emph{$C^{1,\alpha}$--consistent regularization} of multi--phase vortex patch data,
compatible with the classical Euler well-posedness theory
\cite{Lichtenstein1925,Gunther1927}, rather than imposing artificial $C^\infty$
smoothing. If the markers are chosen smooth, the scheme naturally produces a
$C^\infty$ regularization.

\item
Unlike boundary--based regularizations, which become non-canonical in genuinely
multi--phase configurations involving junctions or intersecting interfaces, the
marker formulation regularizes all phases simultaneously through a single global
representation. This avoids additional compatibility conditions at junctions and
preserves the geometric coherence of the entire interface network.
Moreover, the resulting vorticity converges pointwise away from the tie sets, with
errors that decay exponentially in the sharpness parameter $\beta$.
\end{itemize}

\section{Construction of solutions}\label{const}
\begin{theorem}\label{close1}
Let $(\omega^\beta,u^\beta)$ be the solution of the Euler equation
\eqref{eq:euler} with initial data \eqref{initial}. For each $k\in\{1,\dots,K\}$,
let $\varphi_k^\beta$ be the solution of the transport equation with the common
velocity field $u^\beta$,
\begin{equation}\label{ran-2}
\partial_t \varphi_k^\beta + u^\beta\cdot\nabla \varphi_k^\beta = 0,
\end{equation}
with initial condition
\[
\varphi_k^\beta\big|_{t=0} = \varphi_k.
\]
Then the vorticity $\omega^\beta$ admits the representation
\begin{equation}\label{ran-1}
\omega^\beta(t,x)
=
\sum_{k=1}^K c_k\,\pi_k^\beta(t,x),
\end{equation}
where the coefficients $\pi_k^\beta$ are given by
\[
\pi_{k}^\beta(t,x)
=
\frac{e^{\beta \varphi_{k}^\beta(t,x)}}
{\sum_{j=1}^K e^{\beta \varphi_j^\beta(t,x)}},
\qquad k=1,\dots,K.
\]
\end{theorem}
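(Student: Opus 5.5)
The argument is a uniqueness argument for transport by a fixed velocity field. Let $u^\beta$ be the Euler velocity generated by $\omega^\beta$, and let $X^\beta(t,\cdot)$ be its flow map. Set
\[
\tilde\omega(t,x):=\sum_{k=1}^K c_k\,\pi_k^\beta(t,x)=F_\beta\bigl(\varphi_1^\beta(t,x),\dots,\varphi_K^\beta(t,x)\bigr),
\qquad
F_\beta(y):=\sum_k c_k\frac{e^{\beta y_k}}{\sum_j e^{\beta y_j}},
\]
where the markers $\varphi_k^\beta$ are transported by \eqref{ran-2}. We will show that $\tilde\omega$ solves the same linear transport equation $\partial_t f+u^\beta\cdot\nabla f=0$ as $\omega^\beta$, with the same initial datum \eqref{initial}. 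Uniqueness for that linear equation then gives $\tilde\omega=\omega^\beta$, which is \eqref{ran-1}.

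\textbf{Step 1: regularity of the velocity.} Since $\varphi_k^{\mathrm{in}}\in C^{1,\alpha}(\T^2)$ and $F_\beta$ is smooth, the datum $\omega_0^\beta=F_\beta(\varphi^{\mathrm{in}})$ lies in $C^{1,\alpha}\subset L^\infty$. By Yudovich theory there is a unique global solution $\omega^\beta$. Its velocity $u^\beta$ is log-Lipschitz, so the flow map $X^\beta(t,\cdot)$ is a well-defined homeomorphism of $\T^2$ preserving measure. By classical Hölder persistence (Lichtenstein/Gunther) the solution is in fact $C^{\alpha}$ with $u^\beta\in C^{1,\alpha}$, so $X^\beta$ is a $C^{1,\alpha}$ diffeomorphism. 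This regularity is convenient but not essential.

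\textbf{Step 2: Lagrangian representation of the markers.} Transport by $u^\beta$ gives
\[
\varphi_k^\beta(t,X^\beta(t,a))=\varphi_k^{\mathrm{in}}(a)\quad\text{for all } k.
\]
Composing with the smooth map $F_\beta$ yields
\[
\tilde\omega(t,X^\beta(t,a))=F_\beta(\varphi^{\mathrm{in}}(a))=\omega_0^\beta(a).
\]
The Yudovich solution has the same property, $\omega^\beta(t,X^\beta(t,a))=\omega_0^\beta(a)$. Hence $\tilde\omega(t,\cdot)=\omega_0^\beta\circ (X^\beta(t,\cdot))^{-1}=\omega^\beta(t,\cdot)$ pointwise.

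\textbf{Step 3: consistency check via the chain rule.} As a cross-check one can differentiate directly, which is legitimate because $\varphi_k^\beta\in C^1$ under Step 1:
\[
(\partial_t+u^\beta\cdot\nabla)\pi_k^\beta=\beta\,\pi_k^\beta\Bigl((\partial_t+u^\beta\cdot\nabla)\varphi_k^\beta-\sum_j\pi_j^\beta(\partial_t+u^\beta\cdot\nabla)\varphi_j^\beta\Bigr)=0.
\]

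\textbf{Main obstacle.} The one point needing care is Step 1. The velocity $u^\beta$ is defined by $\omega^\beta$ itself, so we must avoid circularity. The argument must use the flow of the already-constructed Yudovich solution; it must not solve a coupled system for the markers. Once that is fixed, everything reduces to the Lagrangian identity above, which needs only that the flow map is well defined (log-Lipschitz suffices). In the Hölder setting, the chain-rule check of Step 3 needs no further justification.
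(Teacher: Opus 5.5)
Your proof is correct, and at its core it is the paper's argument. Both show that $\tilde\omega=F_\beta(\varphi_1^\beta,\dots,\varphi_K^\beta)$ is transported by the same field $u^\beta$ from the same datum $\omega_0^\beta$, and conclude $\tilde\omega=\omega^\beta$.

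The paper works in Eulerian form. It does exactly your Step 3 (chain rule for $e^{\beta\varphi_k^\beta}$ and for the denominator, then the quotient rule), and then appeals to uniqueness of bounded solutions of linear transport with a log-Lipschitz field. Your Step 2 instead composes with the flow map $X^\beta$ directly.

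The Lagrangian version is slightly more economical. It needs no differentiability of the markers, so, as you note, the Lichtenstein--Gunther upgrade in Step 1 is inessential there. It also needs no weak-uniqueness theorem beyond the Lagrangian character of Yudovich solutions. And it applies verbatim to a general $F$, so it gives Proposition~\ref{conserve} as well. The Eulerian computation, by contrast, does need $\varphi_k^\beta\in C^1$ (or a renormalization argument), and your Step 1 supplies exactly that.
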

\begin{proof}
Define
\[
\widetilde{\omega}(t,x)
:=
\sum_{k=1}^K c_k\,\pi_k^\beta(t,x),
\qquad
\pi_k^\beta(t,x)
=
\frac{e^{\beta\varphi_k^\beta(t,x)}}{\sum_{j=1}^K e^{\beta\varphi_j^\beta(t,x)}} .
\]
We show that $\widetilde{\omega}$ satisfies the same transport equation and the
same initial condition as $\omega^\beta$ for the given velocity field $u^\beta$.
Since each score field $\varphi_k^\beta$ solves \eqref{ran-2}, 
the chain rule gives
\[
\partial_t e^{\beta\varphi_k^\beta}
+u^\beta\cdot\nabla e^{\beta\varphi_k^\beta}
=
\beta e^{\beta\varphi_k^\beta}
\bigl(\partial_t\varphi_k^\beta+u^\beta\cdot\nabla\varphi_k^\beta\bigr)
=0.
\]
Summing over $k$, the denominator
\[
S(t,x):=\sum_{j=1}^K e^{\beta\varphi_j^\beta(t,x)}
\]
satisfies
\[
\partial_t S+u^\beta\cdot\nabla S=0.
\]
Applying the quotient rule, we obtain
\[
(\partial_t+u^\beta\cdot\nabla)\pi_k^\beta
=
\frac{(\partial_t+u^\beta\cdot\nabla)e^{\beta\varphi_k^\beta}}{S}
-
\frac{e^{\beta\varphi_k^\beta}}{S^2}
(\partial_t+u^\beta\cdot\nabla)S
=0,
\qquad k=1,\dots,K.
\]
Thus each weight $\pi_k^\beta$ is transported by the velocity field $u^\beta$.
By linearity,
\[
(\partial_t+u^\beta\cdot\nabla)\widetilde{\omega}
=
\sum_{k=1}^K \omega_k\,(\partial_t+u^\beta\cdot\nabla)\pi_k^\beta
=0.
\]
At $t=0$ we have $\varphi_k^\beta(0,\cdot)=\varphi_k(\cdot)$, hence
\[
\pi_k^\beta(0,x)=\pi_k^\beta(x),
\]
and therefore
\[
\widetilde{\omega}(0,x)
=
\sum_{k=1}^Kc_k\,\pi_k^\beta(x)
=
\omega_0^\beta(x).
\]
Thus $\omega^\beta$ and $\widetilde{\omega}$ solve the same linear transport
equation with the same velocity field $u^\beta$ and the same initial data.
By uniqueness of bounded solutions to the transport equation under Yudovich
(log-Lipschitz) velocities, we conclude that
\[
\omega^\beta(t,x)=\widetilde{\omega}(t,x)
\quad\text{for all }t\in[0,T],\ x\in\T^2,
\]
which proves \eqref{ran-1}.
\end{proof}

\begin{remark}[Structural embedding via transported markers]
More generally, the Euler dynamics admits a structural embedding in terms of
transported scalar markers. Specifically, multi--phase vorticity
configurations can be realized as nonlinear observables of a finite family of
passively transported functions, whose relative ordering encodes the phase
geometry. In this representation, all interfacial structure is implicitly
determined by competition between transported quantities, rather than by
explicit interface tracking. We record this observation here as it provides a
useful geometric reparameterization of multiphase Euler flows and appears to be
of independent interest.
\end{remark}

\begin{proposition}[Closure of the multiphase ansatz]
\label{conserve}
Let $K\ge 2$ be a natural number, and let
$\{\varphi_k\}_{k=1}^K \subset C^{1,\alpha}(\T^2)$ be given scalar marker functions.
Assume that $F:\R^K\to\R$ is a smooth function, and define the initial vorticity by
\[
\omega_0(x)=F\bigl(\varphi_1^{\text{in}}(x),\dots,\varphi_K^{\text{in}}(x)\bigr).
\]
Let $(\omega,u)$ be the Yudovich solution of the two--dimensional Euler equation
\begin{equation}\label{eq:euler-closure}
\partial_t\omega + u\cdot\nabla\omega = 0,
\qquad
u = K\ast\omega,
\end{equation}
with initial data $\omega_0$.
Then the vorticity admits the representation
\[
\omega(t,x)
=
F\bigl(\varphi_1(t,x),\dots,\varphi_K(t,x)\bigr),
\]
where each $\varphi_k$ solves the transport equation
\[
\partial_t\varphi_k + u\cdot\nabla\varphi_k = 0,
\qquad
\varphi_k|_{t=0}=\varphi_k^{\text{in}}.
\]
\end{proposition}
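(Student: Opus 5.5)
The argument repeats the proof of Theorem~\ref{close1} with the softmax mixture replaced by a general smooth $F$. Let $u$ be the velocity of the Yudovich solution $(\omega,u)$ with data $\omega_0=F(\varphi_1^{\text{in}},\dots,\varphi_K^{\text{in}})$. Since $\varphi_k^{\text{in}}\in C^{1,\alpha}$ and $F$ is smooth, $\omega_0\in L^\infty(\T^2)$, so the Yudovich theory applies. The field $u$ is log-Lipschitz, hence it generates a unique continuous flow map $X(t,\cdot)$ of measure-preserving homeomorphisms of $\T^2$.

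We define $\varphi_k(t,x):=\varphi_k^{\text{in}}(X^{-1}(t,x))$. This is the unique bounded (Lagrangian, equivalently renormalized/distributional) solution of $\partial_t\varphi_k+u\cdot\nabla\varphi_k=0$ with $\varphi_k|_{t=0}=\varphi_k^{\text{in}}$.

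Next we set $\widetilde\omega(t,x):=F(\varphi_1(t,x),\dots,\varphi_K(t,x))$. The key observation is that transport commutes with composition by $F$: along characteristics
\[
\widetilde\omega(t,X(t,x))=F\bigl(\varphi_1^{\text{in}}(x),\dots,\varphi_K^{\text{in}}(x)\bigr)=\omega_0(x),
\]
so $\widetilde\omega$ is constant along the flow of $u$. Formally, the chain rule gives
\[
(\partial_t+u\cdot\nabla)\widetilde\omega=\sum_k\partial_kF(\varphi)\,(\partial_t+u\cdot\nabla)\varphi_k=0.
\]
At the regularity available, with $\varphi_k$ only continuous in general since $X$ is merely Hölder, the Lagrangian identity is the rigorous substitute. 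Alternatively, one can invoke the renormalization property of DiPerna--Lions for the vector transport equation: $u\in W^{1,p}$ for all $p<\infty$ since $\omega\in L^\infty$, and $F$ is smooth, hence locally Lipschitz on the bounded range of $\varphi$. Also $\widetilde\omega\in L^\infty$ and $\widetilde\omega(0)=\omega_0$.

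Finally, $\omega$ itself is the Lagrangian solution $\omega(t,X(t,x))=\omega_0(x)$, which is the Yudovich representation. Hence $\omega$ and $\widetilde\omega$ are two bounded weak solutions of the same linear transport equation with the fixed velocity $u$ and the same data. By uniqueness of bounded weak solutions of linear transport under log-Lipschitz, or $W^{1,p}$, divergence-free velocity (DiPerna--Lions), they coincide. This yields $\omega=F(\varphi_1,\dots,\varphi_K)$.

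The main obstacle is justifying the chain rule and uniqueness at low regularity, because the markers lose $C^{1,\alpha}$ regularity under a merely log-Lipschitz flow. We handle this through the Lagrangian formulation, which requires no differentiation. One should also note that $u$ is held fixed: we never need to re-solve Euler, only the linear transport equation driven by the already-constructed $u$, so there is no nonlinear fixed-point issue. In the case where $\omega_0\in C^{\alpha}$, for instance when $F$ is smooth, Chemin-type persistence would even keep everything classical, but this is not needed.
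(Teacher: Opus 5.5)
Your proof is correct and takes the same route as the paper. You set $\widetilde\omega=F(\varphi_1,\dots,\varphi_K)$, observe that it is transported by $u$ with initial data $\omega_0$, and conclude $\omega=\widetilde\omega$ by uniqueness for linear transport under the log-Lipschitz field $u$; your Lagrangian identity $\widetilde\omega(t,X(t,x))=\omega_0(x)$ is a more careful substitute for the paper's formal chain rule, and together with $\omega(t,X(t,x))=\omega_0(x)$ it already gives $\widetilde\omega(t)=\omega_0\circ X(t)^{-1}=\omega(t)$ directly, so the final appeal to uniqueness is not even needed.
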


\begin{proof}
Let
\[
\widetilde{\omega}(t,x)
:=
F\bigl(\varphi_1(t,x),\dots,\varphi_K(t,x)\bigr),
\]
where $\{\varphi_k\}$ are transported by the velocity field $u$.
By the chain rule,
\[
\partial_t \widetilde{\omega}
+ u\cdot\nabla \widetilde{\omega}
=
\sum_{k=1}^K
\partial_k F(\varphi_1,\dots,\varphi_K)
\bigl(\partial_t\varphi_k + u\cdot\nabla\varphi_k\bigr)
=0,
\]
since each $\varphi_k$ satisfies the transport equation.
Thus $\widetilde{\omega}$ solves \eqref{eq:euler-closure} with initial data
$\widetilde{\omega}(0,x)=\omega_0(x)$.
By uniqueness of transport solutions associated with the log-Lipschitz velocity $u$,
we conclude that $\omega=\widetilde{\omega}$, which proves the claim.
\end{proof}
\section{Uniform convergence of transport markers}\label{mark-conv}
\paragraph{Limiting transported markers and tie geometry.}
Let $\omega$ be the Yudovich solution of \eqref{eq:euler} with the sharp
multi--phase initial data $\omega_0=\sum_{k=1}^K c_k\,\mathbf 1_{\Omega_k}$, and
let $u$ and $X$ denote the associated velocity field and flow map.  We define,
for each $k\in\{1,\dots,K\}$, the limiting marker evolution by transport along
$u$:
\[
(\partial_t+u\cdot\nabla)\varphi_k(t,x)=0,
\qquad
\varphi_k|_{t=0}=\varphi_k^{\rm in}.
\]
Equivalently, $\varphi_k(t,x)=\varphi_k^{\rm in}\!\big(X(t,\cdot)^{-1}(x)\big)$.
The corresponding tie sets are then defined by
\[
\Gamma_{ij}(t):=\{x\in\T^2:\ \varphi_i(t,x)=\varphi_j(t,x)\},
\qquad i\neq j,
\]
which represent the transported interface geometry associated with the
initial competitive-marker partition.
\paragraph{Localized nondegeneracy near active interfaces.}
We impose a nondegeneracy condition only along \emph{active} two--phase interfaces.
Specifically, we assume that there exist constants \(\delta>0\) and \(m>0\) such that,
for every pair \(i\neq j\),
\begin{equation}\label{nondegen}
|\nabla(\varphi_i^{\mathrm{in}}-\varphi_j^{\mathrm{in}})(x)| \ge m
\quad
\text{for all }
x \in
\bigl\{|\varphi_i^{\mathrm{in}}(x)-\varphi_j^{\mathrm{in}}(x)|\le\delta\bigr\}
\cap
\mathcal T_{ij},
\end{equation}
where \(\mathcal T_{ij}\) denotes the region of \emph{top--two competition},
\[
\mathcal T_{ij}
:=
\Bigl\{
x\in\T^2:\ 
\varphi_i^{\mathrm{in}}(x)\ge \varphi_\ell^{\mathrm{in}}(x)\ \text{and}\ 
\varphi_j^{\mathrm{in}}(x)\ge \varphi_\ell^{\mathrm{in}}(x)
\ \text{for all }\ell\neq i,j
\Bigr\}.
\]
Condition~\eqref{nondegen} ensures that the zero--level sets of the
differences \(\varphi_i^{\mathrm{in}}-\varphi_j^{\mathrm{in}}\), which define the
\emph{active} interfaces between competing phases, are uniformly nondegenerate in
a tubular neighborhood of the corresponding tie sets.
This localized requirement excludes degeneracies away from dynamically relevant
interfaces and may be viewed as a natural multi--phase extension of the classical
nondegeneracy condition underlying vortex patch contour dynamics
\cite{BertozziConstantin1993}.
\begin{proposition}[$L^1$ approximation of initial data]
\label{prop:expG_patch_approx}
Recall the regularized initial vorticity \eqref{initial}
\[
\omega_0^\beta(x):=\sum_{k=1}^K c_k\,\pi_k^\beta(x),
\qquad c_k\in\R.
\]
Let
\[
\omega_0(x):=\sum_{k=1}^K c_k\,\mathbf 1_{\Omega_k}(x).
\]
where $\{\Omega_k\}_{k=1}^K$ is given in \eqref{domain}.
Then there exists $C>0$ (depending on $K,\delta,m$ and the $C^{1,\alpha}$ geometry of
$\Omega_k$) such that for all $\beta\ge1$,
\begin{equation}\label{eq:L1_expG_patch}
\|\omega_0^\beta-\omega_0\|_{L^1(\T^2)}\le \frac{C }{\beta}\,
\max_{1\le i,j\le K}|c_i-c_j|.
\end{equation}
\end{proposition}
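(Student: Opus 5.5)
Since the $\Omega_k$ partition $\T^2$ up to a null set and $\sum_k\pi_k^\beta\equiv1$, on $\Omega_k$ we can write
\[
\omega_0^\beta-\omega_0=\sum_{j\neq k}(c_j-c_k)\,\pi_j^\beta ,
\qquad\text{hence}\qquad
|\omega_0^\beta-\omega_0|\le \max_{i,j}|c_i-c_j|\,\bigl(1-\pi_k^\beta\bigr)\quad\text{on }\Omega_k .
\]
It therefore suffices to show $\sum_k\int_{\Omega_k}(1-\pi_k^\beta)\,dx\le C/\beta$. On $\Omega_k$ we have the pointwise bound
\[
1-\pi_k^\beta=\frac{\sum_{j\ne k}e^{\beta(\varphi_j^{\rm in}-\varphi_k^{\rm in})}}{1+\sum_{j\ne k}e^{\beta(\varphi_j^{\rm in}-\varphi_k^{\rm in})}}\le \sum_{j\neq k} e^{-\beta g_{kj}(x)},
\qquad g_{kj}:=\varphi_k^{\rm in}-\varphi_j^{\rm in}>0 .
\]
Let $j^*(x)$ denote the runner-up index. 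Each term with $j\neq j^*$ satisfies $g_{kj}\ge g_{kj^*}$, so $1-\pi_k^\beta\le (K-1)e^{-\beta g_{kj^*}}$. Splitting $\Omega_k$ according to which $j$ is the runner-up reduces everything to the region $\Omega_k\cap\mathcal T_{kj}$, where we must bound $\int e^{-\beta g_{kj}}$ over the set $\{g_{kj}>0\}$.

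On $\Omega_k\cap\mathcal T_{kj}$ we split at the level $\delta$. Where $g_{kj}>\delta$, the integrand is at most $e^{-\beta\delta}\le C_\delta/\beta$ pointwise, and the domain has finite measure. Where $0<g_{kj}\le\delta$, the nondegeneracy condition \eqref{nondegen} gives $|\nabla g_{kj}|\ge m$, so the coarea formula applies:
\[
\int_{\{0<g_{kj}\le\delta\}\cap\mathcal T_{kj}} e^{-\beta g_{kj}}\,dx
=\int_0^\delta e^{-\beta s}\int_{\{g_{kj}=s\}\cap\mathcal T_{kj}}\frac{d\mathcal H^1}{|\nabla g_{kj}|}\,ds
\le \frac{1}{m}\sup_{0<s\le\delta}\mathcal H^1\bigl(\{g_{kj}=s\}\cap\mathcal T_{kj}\bigr)\int_0^\infty e^{-\beta s}\,ds .
\]
The last factor equals $1/\beta$. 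Summing over $k$ and $j$ then yields \eqref{eq:L1_expG_patch}.

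The main obstacle is a uniform bound on the lengths $\mathcal H^1(\{g_{kj}=s\}\cap\mathcal T_{kj})$ for $s\in(0,\delta]$. I would obtain it from the implicit function theorem. Since $g_{kj}\in C^{1,\alpha}$ with $|\nabla g_{kj}|\ge m$ on the relevant set, each level set is locally a $C^1$ graph over a segment of length comparable to $m/\|g_{kj}\|_{C^{1,\alpha}}$. Covering the compact torus by finitely many such charts (their number depends only on $m$ and the $C^{1,\alpha}$ norms) bounds the length of each level set in each chart, uniformly in $s$.

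A technical subtlety is that $\mathcal T_{kj}$ is not open, so the gradient bound holds only on the set itself and not on a neighborhood of it. I would avoid needing a neighborhood by applying coarea to the Lipschitz function $g_{kj}$ on all of $\T^2$, restricted to the measurable set $\{0<g_{kj}\le\delta\}\cap\mathcal T_{kj}$, where the lower bound on $|\nabla g_{kj}|$ does hold. If needed, I would also shrink $\delta$ slightly so that the local graph description holds on a tubular neighborhood. The dependence of $C$ on $K,\delta,m$ and the $C^{1,\alpha}$ geometry is then exactly as stated.
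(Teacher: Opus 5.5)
Your proposal is correct and follows essentially the paper's route. Both arguments reduce to the pointwise bound $(K-1)\max_{i,j}|c_i-c_j|\,e^{-\beta\Delta}$, with $\Delta$ the winner--runner-up gap, and then use \eqref{nondegen} with the coarea formula to get $O(1/\beta)$. The paper passes through the layer-cake bound $|\{\Delta\le s\}|\le Cs$, whereas you integrate $e^{-\beta g_{kj}}$ directly against the coarea formula; the two are equivalent.

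Two points in your version are more careful than the paper's. First, splitting by runner-up keeps the coarea argument inside $\mathcal T_{kj}$, which is where \eqref{nondegen} is actually assumed; the paper instead takes a union bound over the full strips $\{|\varphi_i^{\mathrm{in}}-\varphi_j^{\mathrm{in}}|\le s\}$. Second, you give an explicit uniform bound on the level-set lengths, which the paper only asserts. One small correction: the radius on which $|\nabla g_{kj}|\ge m/2$ persists is of order $(m/[\nabla g_{kj}]_{C^{0,\alpha}})^{1/\alpha}$, not $m/\|g_{kj}\|_{C^{1,\alpha}}$; this changes only the constant.
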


\begin{proof}
Fix $x\in\T^2$ and let $k^*(x)\in\{1,\dots,K\}$ be an index attaining the maximum
\[
M(x):=\max_{1\le j\le K}\varphi_j^{\text{in}}(x), \qquad 
\varphi_{k^*(x)}^{\text{in}}(x)=M(x).
\]
Define the local gap
\[
\Delta(x):=M(x)-\max_{j\neq k^*(x)}\varphi_j^{\text{in}}(x)\ > 0.
\]
For any $j\neq k^*(x)$ we have
$\varphi_j^{\text{in}}(x)\le M(x)-\Delta(x)$, hence
\[
\sum_{j\neq k^*(x)} e^{\beta\varphi_j^{\text{in}}(x)}
\le
(K-1)e^{\beta(M(x)-\Delta(x))}.
\]
Therefore,
\[
1-\pi_{k^*(x)}^\beta(x)
=
\frac{\sum_{j\neq k^*(x)}e^{\beta\varphi_j^{\text{in}}(x)}}
{e^{\beta M(x)}+\sum_{j\neq k^*(x)}e^{\beta\varphi_j^{\text{in}}(x)}}
\le
(K-1)e^{-\beta\Delta(x)}.
\]
Since $\sum_k\pi_k^\beta\equiv1$, this implies
\[
\sum_{j\neq k^*(x)}\pi_j^\beta(x)\le (K-1)e^{-\beta\Delta(x)}.
\]
Consequently, the pointwise error satisfies
\[
\begin{split}
|\omega_0^\beta(x)-\omega_0(x)|
&=
\left|\sum_{j\neq k^*(x)}\bigl(c_j-c_{k^*(x)}\bigr)\pi_j^\beta(x)\right|
\\
&\le
\max_{i\neq j}|c_i-c_j|
\sum_{j\neq k^*(x)}\pi_j^\beta(x)
\\
&\le
C_0\,e^{-\beta\Delta(x)},
\end{split}
\]
where $C_0=(K-1)\max_{i\neq j}|c_i-c_j|$.
Integrating over $\T^2$, we obtain
\[
\|\omega_0^\beta-\omega_0\|_{L^1(\T^2)}
\le
C_0\int_{\T^2} e^{-\beta\Delta(x)}\,dx.
\]
We now estimate the integral on the right-hand side.
By definition, $\Delta(x)\le s$ implies that there exists a pair $(i,j)$ such that
\[
|\varphi_i^{\text{in}}(x)-\varphi_j^{\text{in}}(x)|\le s.
\]
Hence, for any $s\in(0,\delta)$,
\[
\{\Delta\le s\}
\subset
\bigcup_{1\le i<j\le K}\{\,|\varphi_i^{\text{in}}-\varphi_j^{\text{in}}|\le s\,\}.
\]
Fix a pair $(i,j)$ and set
$f_{ij}:=\varphi_i^{\text{in}}-\varphi_j^{\text{in}}$.
Applying the coarea formula on the strip $\{|f_{ij}|\le\delta\}$ and using the
nondegeneracy condition \eqref{nondegen}, we obtain
\[
\left|\{|f_{ij}|\le s\}\right|
=
\int_{-s}^{s}
\left(\int_{\{f_{ij}=r\}}\frac{1}{|\nabla f_{ij}|}\,d\Hs^1\right)dr
\le
C_{ij}\,s,
\qquad s\in(0,\delta),
\]
where $C_{ij}$ depends on $m,\delta$ and the $C^{1,\alpha}$ geometry of the tie curve
$\{f_{ij}=0\}$.
Summing over all $\binom{K}{2}$ pairs yields
\[
|\{\Delta\le s\}|\le C\,s,
\qquad s\in(0,\delta),
\]
for a constant $C$ depending only on $K,\delta,m$ and the pairwise interface geometry.
We now use the layer-cake representation:
\[
\int_{\T^2} e^{-\beta\Delta(x)}\,dx
=
\int_0^\infty e^{-\beta s}\,d|\{\Delta\le s\}|.
\]
Splitting the integral at $s=\delta$ gives
\[
\int_{\T^2} e^{-\beta\Delta(x)}\,dx
\le
\int_0^\delta \beta e^{-\beta s}|\{\Delta\le s\}|\,ds
+
e^{-\beta\delta}|\T^2|.
\]
Using $|\{\Delta\le s\}|\le C s$ on $(0,\delta)$, we find
\[
\int_0^\delta \beta e^{-\beta s}|\{\Delta\le s\}|\,ds
\le
C\int_0^\delta \beta s e^{-\beta s}\,ds
\le
\frac{C}{\beta}.
\]
The remaining term is exponentially small and can be absorbed.
Therefore,
\[
\int_{\T^2} e^{-\beta\Delta(x)}\,dx
\le
\frac{C}{\beta},
\]
and hence
\[
\|\omega_0^\beta-\omega_0\|_{L^1(\T^2)}
\le
\frac{C}{\beta}\,
\max_{1\le i,j\le K}|c_i-c_j|.
\]
This completes the proof.
\end{proof}

\begin{remark}
The nondegeneracy condition \eqref{nondegen} is used only to obtain the
quantitative convergence rate $O(\beta^{-1})$ in the $L^1$ approximation.
For mere $L^1$ convergence, it can be replaced by the substantially weaker
assumption that the tie set has zero Lebesgue measure, namely
\begin{equation}\label{eq:no_fat_ties}
\Bigl|\Bigl\{x\in\T^2:\ \exists\, i\neq j \text{ such that }
\varphi_i^{\mathrm{in}}(x)=\varphi_j^{\mathrm{in}}(x)=\max_{\ell}\varphi_\ell^{\mathrm{in}}(x)\Bigr\}\Bigr|=0.
\end{equation}
Under this assumption, the exponential weights converge pointwise almost everywhere
to the corresponding hard assignment, and the convergence
\[
\|\omega_0^\beta-\omega_0\|_{L^1(\T^2)}\longrightarrow 0
\qquad\text{as }\beta\to\infty
\]
follows directly from the dominated convergence theorem.
In this case, however, no explicit convergence rate is available in general.
For brevity, we omit the details.
\end{remark}~\\
We recall a stability result in the following lemma; proofs can be found in \cite{majda2002,yudovich1963} (see also \cite{BahouriCheminDanchin2011,Elgindi2014Osgood}).
\begin{lemma}[Yudovich stability of vorticity and flow maps]
\label{lem:yudovich-stability}
Let $\omega^{(1)},\omega^{(2)}$ be Yudovich solutions of \eqref{eq:euler} on
$\T^2$ with initial data
$\omega^{(1)}_0,\omega^{(2)}_0 \in L^1\cap L^\infty(\T^2)$ satisfying
\[
\|\omega^{(1)}_0\|_{L^\infty(\T^2)}+\|\omega^{(2)}_0\|_{L^\infty(\T^2)}\le M.
\]
Let $X^{(1)},X^{(2)}$ denote the associated flow maps.
Then for every $T>0$ there exist increasing functions
\[
\Theta_T,\ \Phi_T:[0,\infty)\to[0,\infty),
\qquad
\Theta_T(r)\to 0,\ \Phi_T(r)\to 0 \quad\text{as } r\to 0,
\]
depending only on $M$ and $T$, such that
\begin{equation}\label{eq:yudovich-combined}
\sup_{t\in[0,T]}
\|\omega^{(1)}(t)-\omega^{(2)}(t)\|_{L^1(\T^2)}
\le
\Theta_T\!\Big(\|\omega^{(1)}_0-\omega^{(2)}_0\|_{L^1(\T^2)}\Big),
\end{equation}
and
\begin{equation}\label{eq:flowmap-combined}
\sup_{t\in[0,T]}\sup_{x\in\T^2}
|X^{(1)}(t,x)-X^{(2)}(t,x)|
\le
\Phi_T\!\Big(\|\omega^{(1)}_0-\omega^{(2)}_0\|_{L^1(\T^2)}\Big).
\end{equation}
\end{lemma}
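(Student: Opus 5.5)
The plan is to work in Lagrangian variables and to close a single Osgood-type inequality for the flow-map discrepancy. The vorticity bound \eqref{eq:yudovich-combined} and the flow-map bound \eqref{eq:flowmap-combined} are then read off from it. Write $\delta_0:=\|\omega^{(1)}_0-\omega^{(2)}_0\|_{L^1}$ and let $\mu(r):=r(1+\log^+(1/r))$. I will use two standard facts that depend only on $M$. First, each $u^{(i)}$ is log-Lipschitz, $|u^{(i)}(x)-u^{(i)}(y)|\le C_M\,\mu(|x-y|)$. Second, the Biot--Savart kernel satisfies $\int_{\T^2}|K(z-a)-K(z-b)|\,dz\le C\,\mu(|a-b|)$. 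Because $\mu$ is concave, Jensen's inequality can be applied repeatedly.

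\emph{Step 1 ($L^1$ flow-map estimate).} Set $\eta(t):=\int_{\T^2}|X^{(1)}(t,x)-X^{(2)}(t,x)|\,dx$. Differentiating gives
\[
\eta'(t)\le\int|u^{(1)}(X^{(1)})-u^{(1)}(X^{(2)})|\,dx+\int|(u^{(1)}-u^{(2)})(X^{(2)})|\,dx .
\]
The first term is at most $C_M\mu(\eta)$, by the log-Lipschitz bound and Jensen's inequality. For the second term, $X^{(2)}(t)$ is measure preserving, so it equals $\|u^{(1)}-u^{(2)}\|_{L^1}$. I then write $u^{(i)}(z)=\int K(z-X^{(i)}(t,y))\,\omega^{(i)}_0(y)\,dy$ and split the difference as
\[
\int\bigl[K(z-X^{(1)}(y))-K(z-X^{(2)}(y))\bigr]\omega^{(1)}_0(y)\,dy+\int K(z-X^{(2)}(y))(\omega^{(1)}_0-\omega^{(2)}_0)(y)\,dy .
\]
Taking the $L^1_z$ norm and applying the kernel estimate with Jensen's inequality yields $\|u^{(1)}-u^{(2)}\|_{L^1}\le C(M\,\mu(\eta)+\delta_0)$. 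Hence $\eta'\le C_M\mu(\eta)+C\delta_0$ with $\eta(0)=0$. Since $\int_0 dr/\mu(r)=\infty$, Osgood's lemma gives $\sup_{[0,T]}\eta\le\Psi_T(\delta_0)\to0$, and $\Psi_T$ depends only on $M$ and $T$.

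\emph{Step 2 (uniform flow-map estimate).} The fields $w=u^{(1)}-u^{(2)}$ are uniformly log-Lipschitz with constant $C_M$. I will use an interpolation: if $|w(x_0)|=A$, then $|w|\ge A/2$ on a ball whose radius is roughly $A/\log(1/A)$. This gives $\|w\|_{L^\infty}\le \rho_M(\|w\|_{L^1})$ for some modulus $\rho_M$. Step 1 then bounds $\sup_{[0,T]}\|u^{(1)}-u^{(2)}\|_{L^\infty}$ by a modulus of $\delta_0$. Running the same Osgood argument pointwise for $|X^{(1)}(t,x)-X^{(2)}(t,x)|$ produces $\Phi_T$, which proves \eqref{eq:flowmap-combined}.

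\emph{Step 3 (vorticity).} Because the flows are measure preserving,
\[
\|\omega^{(1)}(t)-\omega^{(2)}(t)\|_{L^1}\le\delta_0+\int|\omega^{(2)}_0-\omega^{(2)}_0\circ Y_t|\,dx,\qquad Y_t:=(X^{(2)}_t)^{-1}\circ X^{(1)}_t,
\]
and Step 2 shows that $Y_t$ is uniformly close to the identity. This is the main obstacle. The remaining term involves the $L^1$-translation modulus of $\omega^{(2)}_0$, which $M$ alone does not control. The same difficulty appears in the alternative Eulerian route through $\partial_t(\omega^{(1)}-\omega^{(2)})+u^{(1)}\cdot\nabla(\omega^{(1)}-\omega^{(2)})=-(u^{(1)}-u^{(2)})\cdot\nabla\omega^{(2)}$, where it shows up as $\nabla\omega^{(2)}$. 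I would first attempt to remove this dependence by testing against $L^\infty$ functions $g$ and transporting $g$ backward by the flows. If that fails, I would check against the cited references \cite{majda2002,yudovich1963} whether a uniform $L^1$ vorticity modulus in terms of $M$ and $T$ alone is actually available. A concrete way to test this is two nearby patches of the same fixed size: the $L^1$ distance of their rotated copies stays comparable to their perimeter times the displacement, which is not obviously controlled by $\delta_0$ through $M$ alone. If no uniform modulus exists, the vorticity bound must carry a dependence on the data. In the paper's setting that is harmless, because $\omega^{(2)}_0=\omega_0$ is the fixed $C^{1,\alpha}$ patch datum.
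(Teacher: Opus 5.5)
\emph{Verdict.} Steps 1--2 are correct and prove the flow-map estimate. Step 3 does not prove the vorticity estimate, and no argument can, because that part of the lemma is false as stated.

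Your Steps 1--2 give a complete proof of \eqref{eq:flowmap-combined}, with $\Phi_T$ depending only on $M$ and $T$. The paper itself proves nothing here and only cites the literature. Your Lagrangian Osgood scheme is the standard one: measure preservation, the $L^1$ kernel bound, then log-Lipschitz $L^1\to L^\infty$ interpolation for $u^{(1)}-u^{(2)}$. One small repair is needed. The function $r(1+\log^+(1/r))$ is not concave: its derivative jumps from $0$ to $1$ at $r=1$. For the Jensen steps, use instead $\mu(r)=r\log(eD/r)$ on $[0,D]$, where $D$ is the diameter of $\T^2$. This majorant is concave, dominates the original, and still satisfies $\int_0 dr/\mu=\infty$.

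Step 3 leaves \eqref{eq:yudovich-combined} unproved. With $\Theta_T$ depending only on $(M,T)$, the estimate is false, and the term $\int|\omega_0^{(2)}-\omega_0^{(2)}\circ Y_t|\,dx$ you isolated can really be of order one. Here is a sketch of a counterexample.
\begin{itemize}
\item Take $\omega_0^{(2)}=\cos x_2+\eta\cos(Nx_1)$ and $\omega_0^{(1)}=(1+r)\cos x_2+\eta\cos(Nx_1)$, with $\eta>0$ small and fixed.
\item The background shears have velocities $(\sin x_2,0)$ and $((1+r)\sin x_2,0)$. The weak stripes induce only an $O(\eta/N)$ velocity, directed along the stripes.
\item So to leading order the stripes are passively sheared into $\eta\cos(N(x_1-t\sin x_2))$ and $\eta\cos(N(x_1-(1+r)t\sin x_2))$.
\item With $N\approx\pi/(rt)$ the phases differ by about $\pi\sin x_2$. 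Hence $\|\omega^{(1)}(t)-\omega^{(2)}(t)\|_{L^1}\gtrsim\eta$, while $\|\omega^{(1)}_0-\omega^{(2)}_0\|_{L^1}=r\|\cos x_2\|_{L^1}\to0$.
\end{itemize}
This is the usual non-uniform-dependence mechanism: the data-to-solution map is continuous in $L^1$ but not uniformly continuous on $L^\infty$-balls. A fully rigorous version requires a stability estimate for this approximate solution. Your proposed test, two nearby patches of fixed size, would not reveal the failure. For patches whose boundaries have bounded length, your remaining term is $O(\|Y_t-\mathrm{id}\|_{L^\infty})$, so a uniform modulus does exist in that class. The failure needs fine structure at scales below the displacement.

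What is true is the data-dependent version, and your decomposition proves it. Approximate $\omega_0^{(2)}$ in $L^1$ by a Lipschitz $g$ and use that $Y_t$ preserves measure:
\[
\int_{\T^2}\bigl|\omega_0^{(2)}-\omega_0^{(2)}\circ Y_t\bigr|\,dx\le 2\|\omega_0^{(2)}-g\|_{L^1}+\|\nabla g\|_{L^\infty}\int_{\T^2}|Y_t(x)-x|\,dx .
\]
Here $\|Y_t-\mathrm{id}\|_{L^\infty}\to0$ by Step 2 together with the H\"older continuity of $(X^{(2)}_t)^{-1}$. This is all the paper needs. Theorem~\ref{conv-of-markers} uses only \eqref{eq:flowmap-combined}. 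The $L^p$ convergence stated at the start of the section on pointwise estimates concerns the fixed datum $\omega_0^{(2)}=\omega_0$. So, as you say, the defect is harmless downstream once \eqref{eq:yudovich-combined} is restated with a modulus that also depends on $\omega_0^{(2)}$.
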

%
\begin{theorem}\label{conv-of-markers}
Assume $\{\varphi_k^{\mathrm{in}}\}_{k=1}^K \subset W^{1,\infty}(\T^2)$.
Let $\{\varphi_k^\beta\}$ and $\{\varphi_k\}$ be the transported markers driven by
$u^\beta$ and $u$ on $[0,T]$.
Then for each $k$,
\[
\sup_{t\in[0,T]}\|\varphi_k^\beta(t)-\varphi_k(t)\|_{L^\infty(\T^2)}\to 0
\qquad\text{as }\beta\to\infty.
\]
\end{theorem}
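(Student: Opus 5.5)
The approach is Lagrangian: write both transported markers through their flow maps and reduce the marker difference to the flow-map difference controlled by Lemma~\ref{lem:yudovich-stability}. Let $X^\beta$ and $X$ be the flow maps of $u^\beta$ and $u$. Both velocity fields are divergence free and log-Lipschitz, so for each $t$ the maps $X^\beta(t,\cdot)$ and $X(t,\cdot)$ are measure-preserving homeomorphisms of $\T^2$. The transported markers are then
\[
\varphi_k^\beta(t,x)=\varphi_k^{\rm in}\bigl((X^\beta(t,\cdot))^{-1}(x)\bigr),
\qquad
\varphi_k(t,x)=\varphi_k^{\rm in}\bigl((X(t,\cdot))^{-1}(x)\bigr).
\]
These are the unique bounded (indeed continuous) solutions of the transport equations.

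Since $X^\beta(t,\cdot)$ is a bijection, the supremum over $x$ can be rewritten by substituting $x=X^\beta(t,y)$:
\[
\|\varphi_k^\beta(t)-\varphi_k(t)\|_{L^\infty}
=\sup_{y}\bigl|\varphi_k^{\rm in}(y)-\varphi_k^{\rm in}\bigl((X(t,\cdot))^{-1}(X^\beta(t,y))\bigr)\bigr|
\le \|\nabla\varphi_k^{\rm in}\|_{L^\infty}\,\sup_y\bigl|y-X(t)^{-1}(X^\beta(t,y))\bigr|.
\]
Writing $y=X(t)^{-1}(X(t,y))$, the last quantity is bounded by the modulus of continuity of the inverse map $X(t)^{-1}$, evaluated at $|X^\beta(t,y)-X(t,y)|$. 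Yudovich theory gives uniform Hölder control of this inverse. Indeed, $X(t)^{-1}$ is the flow of the backward (time-reversed) field, so
\[
|X(t)^{-1}(a)-X(t)^{-1}(b)|\le C_T|a-b|^{e^{-C_T}}
\quad\text{for } |a-b|\le 1,\ t\in[0,T],
\]
with $C_T$ depending only on $\|\omega_0\|_{L^1\cap L^\infty}$ and $T$.

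Next, the flow-map difference is controlled by Lemma~\ref{lem:yudovich-stability}. The data satisfy $\|\omega_0^\beta\|_{L^\infty}\le\max_k|c_k|$ uniformly in $\beta$, because $\omega_0^\beta$ is a convex combination of the $c_k$. Hence $M$ is independent of $\beta$, and \eqref{eq:flowmap-combined} yields
\[
\sup_{t\le T}\sup_y|X^\beta(t,y)-X(t,y)|\le\Phi_T\bigl(\|\omega_0^\beta-\omega_0\|_{L^1}\bigr).
\]
The argument of $\Phi_T$ tends to $0$, either by Proposition~\ref{prop:expG_patch_approx} (rate $C/\beta$) or, without the nondegeneracy assumption, by the dominated-convergence remark, since the tie set has measure zero. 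Combining the three estimates gives
\[
\sup_{t\le T}\|\varphi_k^\beta-\varphi_k\|_{L^\infty}\le \|\nabla\varphi_k^{\rm in}\|_{L^\infty}\,C_T\,\Phi_T\bigl(\|\omega_0^\beta-\omega_0\|_{L^1}\bigr)^{e^{-C_T}}\to0.
\]

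The main obstacle is the step that passes through $X(t)^{-1}$. Lemma~\ref{lem:yudovich-stability} controls the forward maps, but the markers are composed with inverse maps, and these maps are not Lipschitz. I will handle this with the uniform Hölder (Osgood) modulus of the inverse Yudovich flow, whose constant depends only on the $L^\infty$ bound and $T$. An alternative is to apply the stability lemma to the backward flows, starting from $(t,x)$ and running to time $0$. This requires the Osgood estimate $\tfrac{d}{ds}|Y^\beta-Y|\lesssim \|u^\beta-u\|_{L^\infty}+\mu(|Y^\beta-Y|)$ together with $\|u^\beta-u\|_{L^\infty}\lesssim\Theta_T(\cdot)^{1/2}$, which follows from interpolating the Biot–Savart kernel between $L^1$ and $L^\infty$. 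I would use the Hölder-inverse route first and keep the backward-flow estimate as a fallback.
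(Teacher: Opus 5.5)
Your proof is correct and follows the paper's route. The paper also rewrites the difference at $X^\beta(t,a)$ as $\varphi_k(t,X(t,a))-\varphi_k(t,X^\beta(t,a))$, bounds it by a modulus of continuity of $\varphi_k(t,\cdot)$ uniform in $t$, uses surjectivity of $X^\beta(t,\cdot)$, and concludes with the flow-map stability estimate \eqref{eq:flowmap-combined}. The paper gets that modulus abstractly from continuity of $\varphi_k$ on the compact set $[0,T]\times\T^2$, whereas you make it explicit as the Lipschitz constant of $\varphi_k^{\rm in}$ times the H\"older modulus of $X(t)^{-1}$, which gives a rate. Your checks that the $L^\infty$ bound is uniform in $\beta$ and that $\|\omega_0^\beta-\omega_0\|_{L^1}\to0$ fill in points the paper leaves implicit.
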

\begin{proof}
Fix $T>0$ and $k\in\{1,\dots,K\}$. For each $t\in[0,T]$ and $a\in\T^2$, the transport identities
give
\[
\varphi_k^\beta\!\bigl(t,X^\beta(t,a)\bigr)=\varphi_k^{\mathrm{in}}(a),
\qquad
\varphi_k\!\bigl(t,X(t,a)\bigr)=\varphi_k^{\mathrm{in}}(a).
\]
Therefore,
\begin{equation}\label{eq:marker-diff-along-flows}
\varphi_k^\beta\!\bigl(t,X^\beta(t,a)\bigr)-\varphi_k\!\bigl(t,X^\beta(t,a)\bigr)
=
\varphi_k\!\bigl(t,X(t,a)\bigr)-\varphi_k\!\bigl(t,X^\beta(t,a)\bigr).
\end{equation}
Since $\varphi_k^{\mathrm{in}}\in C(\T^2)$ and $\varphi_k$ is transported by the Yudovich flow,
the function $\varphi_k$ is continuous on $[0,T]\times\T^2$. Hence the family
$\{\varphi_k(t,\cdot)\}_{t\in[0,T]}$ is uniformly equicontinuous: there exists an increasing
modulus $\Omega_{k,T}:[0,\infty)\to[0,\infty)$ with $\Omega_{k,T}(r)\to0$ as $r\to0$ such that
\begin{equation}\label{eq:equicontinuity}
|\varphi_k(t,x)-\varphi_k(t,y)|\le \Omega_{k,T}(|x-y|)
\qquad\text{for all }t\in[0,T],\ x,y\in\T^2.
\end{equation}
Applying \eqref{eq:equicontinuity} to the right-hand side of \eqref{eq:marker-diff-along-flows} yields
\[
\bigl|\varphi_k^\beta\!\bigl(t,X^\beta(t,a)\bigr)-\varphi_k\!\bigl(t,X^\beta(t,a)\bigr)\bigr|
\le
\Omega_{k,T}\!\bigl(|X^\beta(t,a)-X(t,a)|\bigr).
\]
Taking the supremum over $a\in\T^2$ and using that $X^\beta(t,\cdot)$ is  onto,
we obtain
\[
\|\varphi_k^\beta(t)-\varphi_k(t)\|_{L^\infty(\T^2)}
\le
\Omega_{k,T}\!\bigl(\|X^\beta(t,\cdot)-X(t,\cdot)\|_{L^\infty(\T^2)}\bigr),
\qquad t\in[0,T].
\]
Taking $\sup_{t\in[0,T]}$ and invoking the flow stability estimate in Lemma~\ref{lem:yudovich-stability},
we conclude that
\[
\sup_{t\in[0,T]}\|\varphi_k^\beta(t)-\varphi_k(t)\|_{L^\infty}
\le
\Omega_{k,T}\!\Bigl(\Phi_T\bigl(\|\omega_0^\beta-\omega_0\|_{L^1}\bigr)\Bigr).
\]
Absorbing the composition $\Omega_{k,T}\circ\Phi_T$ into a new modulus (still denoted $\Phi_T$)
gives the quantitative bound, and in particular the convergence as $\beta\to\infty$.
\end{proof}
\section{Hausdorff Convergence of Interfaces}\label{sec-hau}
A natural question is whether convergence of the marker fields implies
Hausdorff convergence of the associated interfaces
\[
\Gamma_{ij}^\beta(t)
:=
\{\,x\in\T^2:\ \varphi_i^\beta(t,x)=\varphi_j^\beta(t,x)\,\},
\qquad
\Gamma_{ij}(t)
:=
\{\,x\in\T^2:\ \varphi_i(t,x)=\varphi_j(t,x)\,\}.
\]
Such a statement is subtle and depends sensitively on the geometry of the tie set
$\Gamma_{ij}(t)$. While the markers $\varphi_i^\beta$ converge uniformly to
$\varphi_i$, uniform convergence alone does not control the fine geometry of the
corresponding zero level sets, and in particular does not preclude high--frequency
oscillations of the interfaces. If Hausdorff convergence of $\Gamma_{ij}^\beta(t)$ were to fail at some time
$T_\ast$, this would necessarily reflect a deeper geometric mechanism in the
limiting interface $\Gamma_{ij}(t)$ itself. In particular, such a breakdown may
occur when the interface loses $C^1$ regularity, a phenomenon known to be
possible for vortex patch boundaries under the Euler evolution
\cite{ConstantinWu95}. Even when the initial markers $\varphi_k$ are chosen in
$C^\infty$, the limiting sharp multiphase Euler velocity is typically only
log--Lipschitz; as a consequence, $C^1$ control of the transported markers
$\varphi_k(t,\cdot)$—and hence quantitative geometric control of the tie
sets—cannot be expected without additional assumptions. Theorem \ref{hau1} and Theorem \ref{hau2} quantifies a regime in which
such convergence is guaranteed.

\begin{theorem}
\label{hau1}
Assume that the marker fields $\{\varphi_k\}_{k=1}^K$ satisfy the pairwise
nondegeneracy condition \eqref{nondegen} \emph{uniformly} on the time interval
$[0,T]$, that is, there exist constants $\delta>0$ and $m>0$ such that
\eqref{nondegen} holds for all $t\in[0,T]$. Assume moreover that
\[
\sup_{t\in[0,T]} \|\nabla \varphi_k(t,\cdot)\|_{C^{0,\alpha}(\T^2)} < \infty
\qquad \text{for each } k=1,\dots,K.
\]
Then, for each pair $i\neq j$, the corresponding diffuse interfaces
$\Gamma_{ij}^\beta(t)$ converge to the sharp interface $\Gamma_{ij}(t)$ in
Hausdorff distance, uniformly in time on $[0,T]$. More precisely,
\[
\sup_{t\in[0,T]}
d_H\!\left(\Gamma_{ij}^\beta(t),\,\Gamma_{ij}(t)\right)
\;\longrightarrow\; 0
\qquad \text{as } \beta\to\infty,
\]
where $d_H$ denotes the Hausdorff distance on $\R^2$.
\end{theorem}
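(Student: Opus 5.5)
We reduce the Hausdorff statement to a stability property of the zero set of $f:=\varphi_i-\varphi_j$ under uniform perturbation. Set $f^\beta(t):=\varphi_i^\beta(t)-\varphi_j^\beta(t)$ and $f(t):=\varphi_i(t)-\varphi_j(t)$. By Theorem~\ref{conv-of-markers},
\[
\varepsilon_\beta:=\sup_{t\in[0,T]}\|f^\beta(t)-f(t)\|_{L^\infty(\T^2)}\to0 .
\]
The goal is to show $\sup_t d_H(\Gamma^\beta_{ij}(t),\Gamma_{ij}(t))\le C\varepsilon_\beta$ for $\beta$ large, with $C$ depending on $m$, $\delta$ and the uniform $C^{1,\alpha}$ bound $A:=\sup_t\|\nabla\varphi_i\|_{C^{0,\alpha}}+\|\nabla\varphi_j\|_{C^{0,\alpha}}$. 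We treat the two one-sided distances separately. Throughout, the uniform-in-time nondegeneracy \eqref{nondegen} is applied in the strip $\{|f(t)|\le\delta\}$. Within the active region $\mathcal T_{ij}$ it gives $|\nabla f(t)|\ge m$. By H\"older continuity of $\nabla f$ this extends to $|\nabla f|\ge m/2$ on a ball of radius $r_0=(m/(2A))^{1/\alpha}$ around any point where $|\nabla f|\ge m$.

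\textbf{Step 1: $\Gamma^\beta_{ij}(t)$ lies near $\Gamma_{ij}(t)$.} Take $x$ with $f^\beta(t,x)=0$. Then $|f(t,x)|\le\varepsilon_\beta<\delta$, so $x$ lies in the strip and $|\nabla f(t,x)|\ge m$. Follow the gradient flow of $f$ from $x$, i.e.\ the normalized curve $\dot\gamma=-\mathrm{sgn}(f)\nabla f/|\nabla f|$, which stays in $B(x,r_0)$ as long as its length is below $r_0$. Along this curve $|f|$ decreases at rate at least $m/2$. Hence $f$ vanishes at some $y$ with $|x-y|\le 2\varepsilon_\beta/m$, provided $2\varepsilon_\beta/m<r_0$. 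The same conclusion follows more cleanly from the mean value theorem along the segment in direction $\nabla f(t,x)$, using $\nabla f(t,z)\cdot\nabla f(t,x)/|\nabla f(t,x)|\ge m-A|z-x|^\alpha\ge m/2$.

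\textbf{Step 2: $\Gamma_{ij}(t)$ lies near $\Gamma^\beta_{ij}(t)$.} Take $y$ with $f(t,y)=0$ and $|\nabla f(t,y)|\ge m$, and let $e=\nabla f(t,y)/|\nabla f(t,y)|$. As above, $f(t,y\pm se)\gtrless\pm(m/2)s$ for $0<s<r_0$. Choose $s=4\varepsilon_\beta/m$. Then $f^\beta(t,y+se)>0>f^\beta(t,y-se)$, and by the intermediate value theorem $f^\beta$ vanishes on the segment. Therefore $\operatorname{dist}(y,\Gamma^\beta_{ij})\le 4\varepsilon_\beta/m$. All constants are uniform in $t$, so the two steps combine to give the claim.

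\textbf{Main obstacle.} The main difficulty is the mismatch between the localization in \eqref{nondegen}, which controls only the top-two region $\mathcal T_{ij}$, and the definition of $\Gamma_{ij}(t)$ as the full zero set $\{\varphi_i=\varphi_j\}$. Inside $\mathcal T_{ij}$ the argument above works. Off $\mathcal T_{ij}$, the zero set may be degenerate, so both one-sided estimates can fail there. My first attempt would be to read the theorem for the active tie sets, intersected with the closure of $\mathcal T_{ij}$. In that setting one needs a stability statement for the top-two region itself: points of $\Gamma^\beta_{ij}\cap\mathcal T^\beta_{ij}$ lie in an $O(\varepsilon_\beta)$-enlargement of $\mathcal T_{ij}$, because the comparisons $\varphi_i\ge\varphi_\ell$ are perturbed by at most $2\varepsilon_\beta$. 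I would handle this by enlarging $\mathcal T_{ij}$ slightly, allowing a defect of $2\varepsilon_\beta$ in each comparison, and using continuity of $\nabla f$ to keep the bound $|\nabla f|\ge m/2$ on this enlargement. Junction points, where a third marker ties, require care: near them the segment in Step 2 may exit the active region. I would argue that the segment has length $O(\varepsilon_\beta)$, so any exit costs only another $O(\varepsilon_\beta)$ in Hausdorff distance.
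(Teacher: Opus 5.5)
Your Steps 1 and 2 are the paper's argument: a gradient lower bound with a H\"older remainder, then a sign change along the normal and the intermediate value theorem. Your Step 1 is actually cleaner than the paper's. The paper projects $x$ onto $\Gamma_{ij}(t)$ and expands at the projection ``for $|x-y|$ sufficiently small'', which presupposes the closeness being proved; moving from $x$ along $\nabla f(t,x)$ avoids this.

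The paper's proof deals with what you call the main obstacle by ignoring it. It uses $|\nabla f_{ij}|\ge m$ on the whole strip $\{|f_{ij}|\le\varepsilon\}$, i.e.\ it reads \eqref{nondegen} without the restriction to $\mathcal T_{ij}$. Under that reading your two steps are a complete proof, with $d_H\le 4\varepsilon_\beta/m$ for $\beta$ large.

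For the statement as written, though, your proposal has a genuine gap, and the repair you sketch does not close it. Passing to active tie sets proves a different theorem. Even there, your quantitative claims do not follow from pairwise nondegeneracy; the defect-enlargement of $\mathcal T_{ij}$ and the assertion that junction exits cost only $O(\varepsilon_\beta)$ are only sketched. For example, \eqref{nondegen} allows a junction where $\nabla(\varphi_i-\varphi_j)$ and $\nabla(\varphi_i-\varphi_\ell)$ are parallel. There a uniform $\varepsilon$-perturbation can displace the active part of the tie set by order $\sqrt{\varepsilon}$. More fundamentally, no argument using only $\|f^\beta-f\|_{L^\infty}\to0$ can control the full zero set: off $\mathcal T_{ij}$ the function $f(t)$ may have a tangential zero, which disappears under $f\mapsto f+\varepsilon$.

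The missing idea is that $f^\beta$ is not an arbitrary uniform perturbation of $f$. Both marker families start from the same $\varphi_k^{\mathrm{in}}$ and are transported:
\[
\varphi_k^\beta\bigl(t,X^\beta(t,a)\bigr)=\varphi_k\bigl(t,X(t,a)\bigr)=\varphi_k^{\mathrm{in}}(a)\quad\text{for all }a\in\T^2,
\]
with $X^\beta(t,\cdot)$ and $X(t,\cdot)$ homeomorphisms of $\T^2$. Hence $\Gamma^\beta_{ij}(t)=X^\beta\bigl(t,\Gamma_{ij}(0)\bigr)$ and $\Gamma_{ij}(t)=X\bigl(t,\Gamma_{ij}(0)\bigr)$. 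Pairing $X^\beta(t,a)$ with $X(t,a)$ gives
\[
d_H\bigl(\Gamma^\beta_{ij}(t),\Gamma_{ij}(t)\bigr)\le\sup_{a\in\T^2}\bigl|X^\beta(t,a)-X(t,a)\bigr|\le\Phi_T\bigl(\|\omega_0^\beta-\omega_0\|_{L^1(\T^2)}\bigr).
\]
The right-hand side tends to $0$ uniformly on $[0,T]$ by Lemma~\ref{lem:yudovich-stability} and Proposition~\ref{prop:expG_patch_approx}; the latter needs \eqref{nondegen} only at $t=0$. This is the same flow-map bound on which Theorem~\ref{conv-of-markers}, which you already invoke, rests.

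The identical pairing handles the active sets $\Gamma_{ij}(t)\cap\mathcal T_{ij}(t)=X\bigl(t,\Gamma_{ij}(0)\cap\mathcal T_{ij}(0)\bigr)$, junctions included. It uses neither the time-uniform nondegeneracy nor the $C^{0,\alpha}$ bound. The obstacle you identified is an artifact of the level-set route, which the paper's proof shares without acknowledging it.
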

\begin{proof}
Set
\[
f_{ij}^\beta := \varphi_i^\beta - \varphi_j^\beta,
\qquad
f_{ij} := \varphi_i - \varphi_j.
\]
Their zero level sets are precisely $\Gamma_{ij}^\beta(t)$ and $\Gamma_{ij}(t)$.
By the uniform convergence of the markers,
\begin{equation}\label{eq:uniform-fij}
\|f_{ij}^\beta(t)-f_{ij}(t)\|_{L^\infty(\T^2)}
\le
\|\varphi_i^\beta(t)-\varphi_i(t)\|_{L^\infty}
+
\|\varphi_j^\beta(t)-\varphi_j(t)\|_{L^\infty}
\;\longrightarrow\;0,
\end{equation}
uniformly for $t\in[0,T]$ as $\beta\to\infty$.
Fix $\varepsilon\in(0,\tfrac{\delta}{2})$.
For $\beta$ sufficiently large, \eqref{eq:uniform-fij} implies
\[
\|f_{ij}^\beta(t)-f_{ij}(t)\|_{L^\infty}<\varepsilon
\qquad\text{for all } t\in[0,T_\star].
\]
\noindent\emph{Step 1: $\Gamma_{ij}^\beta(t)$ lies near $\Gamma_{ij}(t)$.}
Let $x\in\Gamma_{ij}^\beta(t)$.
Then $f_{ij}^\beta(t,x)=0$, hence
\[
|f_{ij}(t,x)| = |f_{ij}(t,x)-f_{ij}^\beta(t,x)| < \varepsilon.
\]
The  nondegeneracy
condition \eqref{nondegen} yields
\[
|\nabla f_{ij}(t,x)|
\ge
m
\]
whenever $|f_{ij}(t,x)|\le\varepsilon$. Since $f_{ij}(t,\cdot)\in C^{1,\alpha}$ and $\nabla f_{ij}(t,\cdot)\neq 0$
near $\Gamma_{ij}(t)$, the implicit function theorem implies that
$\Gamma_{ij}(t)$ is a $C^1$ curve locally.
Let $y$ be the projection of $x$ onto $\Gamma_{ij}(t)$, so that
\[
|x-y|=\dist\bigl(x,\Gamma_{ij}(t)\bigr),
\qquad
f_{ij}(t,y)=0.
\]
Let $n(y):=\nabla f_{ij}(t,y)/|\nabla f_{ij}(t,y)|$ be the unit normal to
$\Gamma_{ij}(t)$ at $y$.
Writing $x=y+h\,n(y)$ with $h=|x-y|$, the fundamental theorem of calculus along
the segment $\gamma(s):=y+s(x-y)$ yields
\[
f_{ij}(t,x)
=
\nabla f_{ij}(t,y)\cdot(x-y)
+
R(x),
\]
where
\[
R(x)
=
\int_0^1
\bigl(\nabla f_{ij}(t,\gamma(s))-\nabla f_{ij}(t,y)\bigr)\cdot(x-y)\,ds.
\]
Using the H\"older continuity of $\nabla f_{ij}$,
\[
|R(x)|
\le
\frac{1}{1+\alpha}
[\nabla f_{ij}(t,\cdot)]_{C^{0,\alpha}}
\,|x-y|^{1+\alpha}.
\]
For $|x-y|$ sufficiently small, this gives
\[
|f_{ij}(t,x)|
\ge
\frac{m_{T}}{2}\,|x-y|,
\]
and therefore
\[
\dist\bigl(x,\Gamma_{ij}(t)\bigr)
\le
C\,\frac{|f_{ij}(t,x)|}{m_{T}}
\le
C_{T}\,\varepsilon.
\]

\medskip
\noindent\emph{Step 2: $\Gamma_{ij}(t)$ lies near $\Gamma_{ij}^\beta(t)$.}
Fix $t\in[0,T]$ and let $y\in\Gamma_{ij}(t)$.
Then $f_{ij}(t,y)=0$ and $|\nabla f_{ij}(t,y)|\ge m_{T_\star}$.
Let $n(y)$ denote the unit normal at $y$.
Similarly as above, for $|h|$ small,
\[
f_{ij}(t,y\pm h n)
=
\pm h\,|\nabla f_{ij}(t,y)|
+
R_\pm(h),
\]
with
\[
|R_\pm(h)|
\le
\frac{1}{1+\alpha}
[\nabla f_{ij}(t,\cdot)]_{C^{0,\alpha}}
\,|h|^{1+\alpha}.
\]
Choosing $h_0>0$ sufficiently small, we obtain
\[
f_{ij}(t,y+h n)\ge \tfrac{m_{T}}{2}h,
\qquad
f_{ij}(t,y-h n)\le -\tfrac{m_{T}}{2}h.
\]
Let
\[
\delta_\varepsilon := \min\!\left\{h_0,\ \frac{3\varepsilon}{m_{T}}\right\}.
\]
Using the uniform convergence of $f_{ij}^\beta$ to $f_{ij}$, for $\beta$ large
we have
\[
f_{ij}^\beta(t,y+\delta_\varepsilon n) > 0,
\qquad
f_{ij}^\beta(t,y-\delta_\varepsilon n) < 0.
\]
By continuity, there exists a point
$z\in[y-\delta_\varepsilon n,\,y+\delta_\varepsilon n]$
such that $f_{ij}^\beta(t,z)=0$, i.e.\ $z\in\Gamma_{ij}^\beta(t)$, and
\[
|z-y|\le\delta_\varepsilon \le \frac{3\varepsilon}{m_{T_\star}}.
\]
Combining the two inclusions yields
\[
d_H\bigl(\Gamma_{ij}^\beta(t),\,\Gamma_{ij}(t)\bigr)
\le C\,\varepsilon
\qquad\text{for all } t\in[0,T],
\]
with $C$ independent of $\beta$.
Since $\varepsilon$ is arbitrary, the Hausdorff convergence follows.
\end{proof}
\begin{theorem}\label{hau2}
Let $\varphi_k^{\mathrm{in}}\in C^{1,\alpha}$.
If there exist $T_\star>0$ and indices $i\neq j$ such that
\[
\sup_{t\in[0,T_\star]}
d_H\!\left(\Gamma_{ij}^\beta(t),\,\Gamma_{ij}(t)\right)
\;\not\longrightarrow\; 0
\qquad \text{as } \beta\to\infty,
\]
then necessarily
\[
\int_0^{T_\star}
\bigl(
\|\nabla u(t)\|_{L^\infty(\T^2)}
+
\|\nabla u(t)\|_{C^{0,\alpha}(\T^2)}
\bigr)\,dt
= \infty.
\]
\end{theorem}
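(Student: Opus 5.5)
We argue by contraposition. Assuming
\[
\int_0^{T_\star}\bigl(\|\nabla u(t)\|_{L^\infty}+\|\nabla u(t)\|_{C^{0,\alpha}}\bigr)\,dt<\infty,
\]
we verify the hypotheses of Theorem~\ref{hau1} on $[0,T_\star]$. Theorem~\ref{hau1} then gives Hausdorff convergence, which contradicts the assumption.

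\emph{Step 1: propagation of $C^{1,\alpha}$ regularity.} The limiting flow $X$ satisfies $\partial_t\nabla X=\nabla u(t,X)\nabla X$. By Gr\"onwall's inequality,
\[
\|\nabla X(t)\|_{L^\infty}+\|\nabla X^{-1}(t)\|_{L^\infty}\le \exp\Bigl(\int_0^t\|\nabla u\|_{L^\infty}\Bigr).
\]
Differentiating the Hölder quotient of $\nabla X$ and using $[\nabla u(s,X(s,\cdot))]_{C^{0,\alpha}}\le [\nabla u(s)]_{C^{0,\alpha}}\|\nabla X(s)\|_{L^\infty}^\alpha$, a second Gr\"onwall argument bounds $[\nabla X(t)]_{C^{0,\alpha}}$, and likewise $[\nabla X^{-1}(t)]_{C^{0,\alpha}}$, uniformly on $[0,T_\star]$ by a constant depending on the integral above. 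Since $\varphi_k(t)=\varphi_k^{\mathrm{in}}\circ X(t)^{-1}$, the chain rule gives
\[
\sup_{t\le T_\star}\|\nabla\varphi_k(t)\|_{C^{0,\alpha}}<\infty.
\]
This is the second hypothesis of Theorem~\ref{hau1}.

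\emph{Step 2: persistence of nondegeneracy.} Write $f_{ij}(t)=f_{ij}^{\mathrm{in}}\circ X(t)^{-1}$. Then
\[
\nabla f_{ij}(t,x)=(\nabla X(t)^{-1})^{T}(x)\,\nabla f_{ij}^{\mathrm{in}}(X(t)^{-1}x),
\]
so
\[
|\nabla f_{ij}(t,x)|\ge \|\nabla X(t)\|_{L^\infty}^{-1}\,|\nabla f_{ij}^{\mathrm{in}}(a)|, \qquad a=X(t)^{-1}x.
\]
The sets $\{|f_{ij}|\le\delta\}\cap\mathcal T_{ij}$ are defined by comparisons of marker values, which are preserved along trajectories. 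Hence $x$ lies in the time-$t$ region exactly when $a$ lies in the initial region. Condition \eqref{nondegen} therefore holds at each time $t\le T_\star$ with the same $\delta$ and with
\[
m_{T_\star}=m\exp\Bigl(-\int_0^{T_\star}\|\nabla u\|_{L^\infty}\Bigr)>0.
\]

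\emph{Step 3: conclusion.} With both hypotheses verified uniformly on $[0,T_\star]$, Theorem~\ref{hau1} yields $\sup_{t\le T_\star}d_H(\Gamma_{ij}^\beta(t),\Gamma_{ij}(t))\to0$, the desired contradiction.

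\emph{Main obstacles.} The hardest step is Step 1, the Hölder estimate for $\nabla X$. It needs a careful but standard Gr\"onwall bound on the difference quotient, and it needs the bound transferred to the inverse flow. For the inverse flow we would use $\nabla X^{-1}=(\nabla X)^{-1}\circ X^{-1}$ together with $\det\nabla X=1$, which is where incompressibility enters. A second point to check is whether Theorem~\ref{hau1} requires the local $C^1$ structure only near $\Gamma_{ij}$ or globally. In the latter case we would apply the Step 2 lower bound only on the top-two competition region, which is exactly where \eqref{nondegen} is imposed, so no additional assumption is needed.
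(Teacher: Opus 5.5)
Your proposal is correct and follows the same route as the paper. Both argue by contraposition: get a uniform nondegeneracy constant $m\exp\bigl(-\int_0^{T_\star}\|\nabla u\|_{L^\infty}\,dt\bigr)$ (the paper via the Gr\"onwall bound of Proposition~\ref{trans-non}, you via the flow-map Jacobian), get $\sup_t\|\nabla\varphi_k(t)\|_{C^{0,\alpha}}<\infty$ from transport estimates, and invoke Theorem~\ref{hau1}. Your remark that $\{|f_{ij}|\le\delta\}\cap\mathcal T_{ij}$ is carried exactly by the flow is actually more careful than the paper, since Proposition~\ref{trans-non} assumes nondegeneracy on the whole strip $\{|f_0|\le\delta\}$, not just the top-two region where \eqref{nondegen} is imposed.
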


\begin{proof}
By Proposition~\ref{trans-non}, for any $(t,x)$ satisfying
$|(\varphi_i-\varphi_j)(t,x)|\le\delta$ we have
\begin{equation}\label{eq:cor-transported-nondeg}
|\nabla(\varphi_i-\varphi_j)(t,x)|
\;\ge\;
m\,\exp\!\Bigl(
-\int_0^{t}\|\nabla u(s)\|_{L^\infty(\T^2)}\,ds
\Bigr).
\end{equation}
Thus the transported markers $\{\varphi_k\}_{k=1}^K$ satisfy the pairwise
nondegeneracy condition \eqref{nondegen} uniformly on $[0,T_\star]$,
verifying the first hypothesis of Theorem~\ref{hau1}. Moreover, each $\varphi_k$ solves the transport equation
\[
\partial_t\varphi_k + u\cdot\nabla\varphi_k = 0.
\]
If $\nabla u\in L^1(0,T_\star;C^{0,\alpha}(\T^2))$, standard transport estimates
yield $\nabla\varphi_k\in L^\infty(0,T_\star;C^{0,\alpha}(\T^2))$.
Hence all assumptions of Theorem~\ref{hau1} are satisfied, implying
$d_H(\Gamma_{ij}^\beta(t),\Gamma_{ij}(t))\to0$ uniformly on $[0,T_\star]$,
a contradiction. The claim follows.
\end{proof}
\section{Pointwise estimates of vorticity}
Since we work in the Yudovich regime, it follows immediately that
\[
\|\omega^\beta(t)-\omega(t)\|_{L^p(\T^2)} \;\longrightarrow\; 0
\qquad \text{for all } p\in[1,\infty).
\]
The purpose of this section, however, is not to study convergence in averaged
norms, but rather to analyze the \emph{pointwise} discrepancy between the
geometry-preserving regularized vorticity $\omega^\beta$ and the sharp
multi-phase Euler solution
\[
\omega(t)=\sum_{k=1}^K c_k\,\mathbf 1_{\Omega_k(t)}.
\]
To make use of the geometry, we define the following
\begin{definition}[Fixed--time gap away from the tie set]
\label{def:fixed-time-gap}
Fix $t\ge 0$ and $\delta>0$. Define the global tie set
\[
\Gamma(t)
:=
\bigcup_{1\le i<j\le K}
\{x\in\T^2:\ \varphi_i(t,x)=\varphi_j(t,x)\}.
\]
For each $x\in\T^2$, let
\[
k(t,x)\in\arg\max_{1\le \ell\le K}\varphi_\ell(t,x),
\]
and define the winner--phase gap
\[
\Delta(t,x)
:=
\varphi_{k(t,x)}(t,x)-\max_{j\neq k(t,x)}\varphi_j(t,x)\ge 0.
\]
The \emph{$\delta$--gap at time $t$} is
\beq\label{gap}
c_\delta(t)
:=
\inf_{\{x:\ \dist(x,\Gamma(t))\ge \delta\}}
\Delta(t,x).
\eeq
\end{definition}

\begin{remark}
The quantity $c_\delta(t)$ measures the separation between competing phases away
from the interface network. If $c_\delta(t)>0$, each particle at distance at least
$\delta$ from $\Gamma(t)$ is unambiguously associated with a single phase, and
phase competition is confined to a thin interfacial region. If $c_\delta(t)$
collapses, phase separation deteriorates even away from the interfaces, and
pointwise convergence of $\omega^\beta$ to $\omega$ may fail \emph{even on}
$\{x : \dist(x,\Gamma(t)) \ge \delta\}$, as suggested by
Theorem~\ref{2patch}.  
From this perspective, the region of phase competition constitutes a genuine
\emph{time-dependent boundary layer} in the multiphase regime, analogous to the Prandtl boundary
layer arising in the vanishing-viscosity limit of the Navier--Stokes equations \cite{2N}.
\end{remark}
\begin{theorem}
\label{2patch}
Fix $t\ge 0$ and $\delta>0$. Define the marker approximation error
\[
E_\beta(t)
:=
\max_{1\le k\le K}
\|\varphi_k^\beta(t,\cdot)-\varphi_k(t,\cdot)\|_{L^\infty(\T^2)}.
\]
Then the following uniform estimate holds:
\beq\label{pointwise}
\sup_{\dist(x,\Gamma(t))\ge\delta}
\bigl|\omega^\beta(t,x)-\omega(t,x)\bigr|
\;\le\;
(K-1)\max_{1\le i,j\le K}|c_i-c_j|\;
e^{-\beta\,(c_\delta(t)-2E_\beta(t))}.
\eeq
where $\omega(t,x)=\sum_{k=1}^K c_k\,\mathbf 1_{\Omega_k(t)}(x)$ is the
multi--phase vorticity.
\end{theorem}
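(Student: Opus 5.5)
The plan is to repeat the pointwise argument from the proof of Proposition~\ref{prop:expG_patch_approx} at time $t$, now for the transported markers $\varphi_k^\beta(t,\cdot)$, and to compare their ordering with that of the limiting markers $\varphi_k(t,\cdot)$ using the uniform error $E_\beta(t)$. By Theorem~\ref{close1}, $\omega^\beta(t,x)=\sum_k c_k\pi_k^\beta(t,x)$, where $\pi_k^\beta$ is the softmax of the $\varphi_k^\beta(t,x)$. We identify the sharp solution with $\omega(t,x)=c_{k(t,x)}$ at points off $\Gamma(t)$, where $k(t,x)$ is the winner for the limiting markers. This identification is justified because $\Omega_k(t)=X(t,\Omega_k)$ and $\varphi_k(t,\cdot)=\varphi_k^{\rm in}\circ X(t,\cdot)^{-1}$. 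Hence $\Omega_k(t)=\{\varphi_k(t,\cdot)>\varphi_j(t,\cdot)\ \forall j\ne k\}$, and the winner is unique whenever $\Delta(t,x)>0$.

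Fix $x$ with $\dist(x,\Gamma(t))\ge\delta$ and write $k=k(t,x)$. If $c_\delta(t)\le 2E_\beta(t)$, the right-hand side of \eqref{pointwise} is at least $(K-1)\max|c_i-c_j|$. The estimate then holds trivially, since both vorticities take values in $[\min c,\max c]$ and so differ by at most $\max|c_i-c_j|\le (K-1)\max|c_i-c_j|$ (recall $K\ge2$). So assume $c_\delta(t)>2E_\beta(t)$. For $j\ne k$,
\[
\varphi_k^\beta(t,x)-\varphi_j^\beta(t,x)\ \ge\ \varphi_k(t,x)-\varphi_j(t,x)-2E_\beta(t)\ \ge\ \Delta(t,x)-2E_\beta(t)\ \ge\ c_\delta(t)-2E_\beta(t)>0.
\]
Thus $k$ is also the strict winner for the $\beta$-markers, with gap at least $c_\delta(t)-2E_\beta(t)$.

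Next we bound the losing weights exactly as in Proposition~\ref{prop:expG_patch_approx}:
\[
1-\pi_k^\beta(t,x)=\sum_{j\ne k}\pi_j^\beta(t,x)\le\sum_{j\ne k}e^{-\beta(\varphi_k^\beta-\varphi_j^\beta)(t,x)}\le (K-1)e^{-\beta(c_\delta(t)-2E_\beta(t))}.
\]
Since $\sum_j\pi^\beta_j=1$, we can write $\omega^\beta(t,x)-c_k=\sum_{j\ne k}(c_j-c_k)\pi_j^\beta(t,x)$, so
\[
|\omega^\beta(t,x)-\omega(t,x)|\le \max_{i,j}|c_i-c_j|\sum_{j\ne k}\pi_j^\beta(t,x).
\]
Combining the two displays gives \eqref{pointwise} after taking the supremum over $x$. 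If one wants the constant exactly as stated, the sum $\sum_{j\ne k}\pi_j^\beta$ is bounded directly by $(K-1)e^{-\beta(\cdot)}$.

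The only delicate point is the identification of $\omega(t,x)$ with $c_{k(t,x)}$ away from $\Gamma(t)$, i.e.\ making sure the Yudovich patch solution coincides pointwise with the marker-defined partition. This relies on the flow map $X(t,\cdot)$ being a measure-preserving homeomorphism, so that the partition \eqref{domain} is transported exactly by the limiting markers. Everything else is the elementary softmax bound already used at $t=0$.
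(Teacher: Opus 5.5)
Your proof is correct and is essentially the paper's argument. You shift the limiting gap $c_\delta(t)$ by $2E_\beta(t)$ to get $\varphi_k^\beta-\varphi_j^\beta\ge c_\delta(t)-2E_\beta(t)$, bound each losing weight by $e^{-\beta(\varphi_k^\beta-\varphi_j^\beta)}$, and use $\sum_j\pi_j^\beta=1$ to write $\omega^\beta-c_k=\sum_{j\ne k}(c_j-c_k)\pi_j^\beta$. Your case split for $c_\delta(t)\le 2E_\beta(t)$ is harmless but unnecessary, since the weight bound holds regardless of sign. Your justification that $\omega(t,x)=c_{k(t,x)}$ off $\Gamma(t)$, via $\Omega_k(t)=X(t,\Omega_k)$, fills in a step the paper simply asserts.
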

\begin{proof}
Fix $x$ with $\dist(x,\Gamma(t))\ge\delta$ and let $k=k(t,x)$.
By definition of $c_\delta(t)$,
\[
\varphi_k(t,x)-\max_{j\neq k}\varphi_j(t,x)\ge c_\delta(t).
\]
Hence, for any $j\neq k$,
\[
\varphi_k^\beta(t,x)-\varphi_j^\beta(t,x)
\ge
\bigl(\varphi_k(t,x)-\varphi_j(t,x)\bigr)-2E_\beta(t)
\ge
c_\delta(t)-2E_\beta(t).
\]
Therefore,
\[
\frac{\pi_j^\beta(t,x)}{\pi_k^\beta(t,x)}
=
\exp\!\bigl(-\beta(\varphi_k^\beta(t,x)-\varphi_j^\beta(t,x))\bigr)
\le
\exp\!\Bigl(-\beta\bigl(c_\delta(t)-2E_\beta(t)\bigr)\Bigr).
\]
Summing over $j\neq k$ yields
\[
\sum_{j\neq k}\pi_j^\beta(t,x)
\le
(K-1)\exp\!\Bigl(-\beta\bigl(c_\delta(t)-2E_\beta(t)\bigr)\Bigr).
\]
Since $\omega(t,x)=c_k$ and $\sum_j\pi_j^\beta(t,x)=1$, we conclude
\[
|\omega^\beta(t,x)-\omega(t,x)|
=
\left|\sum_{j\neq k}(c_j-c_k)\pi_j^\beta(t,x)\right|
\le
\max_{i,j}|c_i-c_j|\sum_{j\neq k}\pi_j^\beta(t,x),
\]
which gives the desired estimate.
\end{proof}
\begin{remark}
The pointwise bounds \eqref{pointwise} cease to be meaningful once
$c_\delta(t)\lesssim E_\beta(t)$, that is, when the minimal separation between
competing markers becomes comparable to the intrinsic regularization scale.
This regime corresponds to a geometric degeneration of the transported tie
sets, in which the marker gap collapses relative to the sharpness of the gating
mechanism, rendering \emph{competing phases effectively indistinguishable}. As a
consequence, the associated interface network may develop singular features,
such as the collapse of junction angles or the formation of cusp-like and
pinching singularities, akin to the behavior of zero sets near points where the
defining gradient vanishes. The resulting failure of pointwise
convergence is therefore driven by the geometry of the underlying sharp
multi-phase Euler flow, rather than by the regularization procedure itself.
\end{remark}~\\
Nevertheless, away from this degenerate regime, the interface locations remain
stable, and the diffuse interfaces generated by the regularization converge to
their sharp counterparts at an exponential rate in $\beta$, up to the onset of
geometric degeneration.
In particular, for classical global vortex patch
solutions \cite{BertozziConstantin1993,Chemin93,Chemin95}, for which no
singularity formation occurs, this exponential convergence holds globally in
time away from the patch boundary.
\begin{theorem}\label{1patch}
Assume that the pairwise nondegeneracy condition \eqref{nondegen} holds
uniformly on the time interval $[0,T]$: there exist constants $\delta_T>0$ and
$m_T>0$ such that \eqref{nondegen} holds for all $t\in[0,T]$ and all $i\neq j$.
Then the $\delta$--gap $c_\delta(t)$ defined in \eqref{gap} satisfies
\[
\inf_{t\in[0,T]} c_\delta(t) = c_T > 0.
\]
As a consequence, there exist constants $C,c>0$, independent of $\beta$, such
that
\[
\sup_{\substack{t\in[0,T]\\ \dist(x,\Gamma(t))\ge \delta}}
\bigl|\omega^\beta(t,x)-\omega(t,x)\bigr|
\;\le\; C e^{-c\beta}.
\]
\end{theorem}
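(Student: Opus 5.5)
The plan has two parts: first a uniform-in-time positive lower bound on $c_\delta(t)$, then an application of Theorem~\ref{2patch} combined with the marker convergence of Theorem~\ref{conv-of-markers}.

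\emph{Lower bound on the gap.} Fix $t\in[0,T]$ and $x$ with $\dist(x,\Gamma(t))\ge\delta$. Let $k=k(t,x)$ be the winner and $j$ a runner-up, so that $\Delta(t,x)=(\varphi_k-\varphi_j)(t,x)$ and $x\in\mathcal T_{kj}(t)$. Set $f=\varphi_k-\varphi_j$. Suppose $\Delta(t,x)=s$ with $s<\delta_T$. The top-two nondegeneracy \eqref{nondegen} at time $t$ gives $|\nabla f|\ge m_T$ near $x$. I then follow the gradient flow of $-f$ from $x$, i.e.\ the curve $\dot\gamma=-\nabla f/|\nabla f|$. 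Along it, $f$ decreases at unit-speed rate at least $m_T$, so it reaches $\{f=0\}$ after arclength at most $s/m_T$.

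Two things must be checked. First, the curve stays in the nondegeneracy region: $|f|\le\delta_T$ holds automatically because $f$ decreases, but membership in $\mathcal T_{kj}$ may be lost. If the curve leaves $\mathcal T_{kj}$, some third marker $\ell$ overtakes $k$ or $j$. This creates a tie between the top two markers at a point within distance $s/m_T$ of $x$, and that point again lies in $\Gamma(t)$. Second, the endpoint (or this intermediate tie point) indeed lies in $\Gamma(t)$. Either way we get $\dist(x,\Gamma(t))\le s/m_T$. Since this distance is at least $\delta$, we conclude $s\ge\min(\delta_T, m_T\delta)$, hence
\[
c_T:=\inf_{t\in[0,T]}c_\delta(t)\ge\min(\delta_T,m_T\delta)>0.
\]
I expect this step to be the main obstacle: one must make rigorous that the descent curve either reaches the zero set of $f$ or first hits another tie. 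The cleanest version tracks the top-two gap function $\Delta(t,\cdot)$ itself, which is Lipschitz. On the set where $\Delta(t,\cdot)>0$ it equals a difference of two markers, with gradient at least $m_T$. A one-sided Łojasiewicz-type argument then yields $\Delta(t,y)\ge m_T\,\dist(y,\Gamma(t))$ in the strip $\{\Delta\le\delta_T\}$, uniformly in $t$ because $\delta_T,m_T$ do not depend on $t$. If the paper's definition of $\Gamma(t)$ as the union of all pairwise tie sets (not only active ones) causes trouble, I would instead restrict to active ties, where the inequality still goes in the right direction. The reason is that ties within $\Gamma(t)$ only make the distance to $\Gamma(t)$ smaller, which strengthens rather than weakens the bound.

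\emph{Exponential estimate.} By Theorem~\ref{conv-of-markers}, $\sup_{[0,T]}E_\beta(t)\to0$, so there is $\beta_0$ with $2E_\beta(t)\le c_T/2$ for all $t\in[0,T]$ and $\beta\ge\beta_0$. Then \eqref{pointwise} gives the claimed bound with $c=c_T/2$ and $C=(K-1)\max|c_i-c_j|$. For $\beta<\beta_0$, the trivial bound $|\omega^\beta-\omega|\le2\max|c_i|$ holds, and enlarging $C$ by the factor $e^{c\beta_0}$ absorbs this range, so the estimate holds for all $\beta$.
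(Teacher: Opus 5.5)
Your proposal is correct. The second half (choose $\beta_0$ with $\sup_{[0,T]}E_\beta\le c_T/4$, then apply Theorem~\ref{2patch}) is the same as the paper's, and absorbing $\beta<\beta_0$ into $C$ is a small completion the paper omits. Where you genuinely differ is the gap bound.

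\textbf{The paper's route.} It takes a closest point $y\in\Gamma_{kj}(t)$ and integrates along the straight segment from $y$ to $x$. It then claims $\int_0^\ell\nabla f\cdot n\,ds\ge\int_0^\ell|\nabla f|\,ds$. That inequality holds only if $\nabla f$ stays aligned with the fixed direction $n$ along the segment, and nothing in the hypotheses provides this. The claims that $f\le f(t,x)$ and that the segment stays in $\mathcal T_{kj}(t)$ are likewise only asserted.

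\textbf{Your route.} The steepest-descent curve $\dot\gamma=-\nabla f/|\nabla f|$ gives $\frac{d}{d\sigma}f(\gamma(\sigma))=-|\nabla f(\gamma(\sigma))|\le -m_T$ exactly. So you obtain the same constant $c_T=\min\{\delta_T,m_T\delta\}$ from the lower gradient bound alone. You need only continuity of $\nabla f(t,\cdot)$ for Peano existence, and no uniform-in-$t$ modulus of continuity for $\nabla f$, which any straight-line Taylor argument would require. Your route is therefore the more robust one.

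\textbf{The obstacle you anticipate is not there.} Since $\Gamma(t)$ contains every pairwise equality, each $\varphi_a-\varphi_b$ has constant sign on the connected ball $B(x,\delta)$. Hence the whole ordering is frozen there, $B(x,\delta)\subset\mathcal T_{kj}(t)$, and $f>0$ on it. Suppose $s<\min\{\delta_T,m_T\delta\}$ and run your flow in the open set $B(x,\delta)\cap\{f<\delta_T\}$. The trajectory stays in the compact set $\overline{B(x,s/m_T)}\cap\{f\le s\}$, yet $f$ would have to reach $0$ before arclength $s/m_T$. This contradicts $f>0$ together with the continuation property of maximal solutions, and no exit case analysis is needed.

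\textbf{Two smaller points.} In your exit case, the new tie is between the runner-up $j$ and a third marker, with $\varphi_k$ still strictly on top; it is not a tie between the top two. It counts only because the paper's $\Gamma(t)$ includes non-active ties. For the same reason, your fallback of restricting to active ties would be harder, not easier, even though it would logically give a stronger statement. There, $\Delta$ is merely a minimum of $C^1$ differences at runner-up switches, and you would need a nonsmooth descent (slope) argument.
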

\begin{proof}
Let $\delta>0$ be the spatial exclusion radius in \eqref{gap}. Since the pairwise nondegeneracy condition \eqref{nondegen} holds \emph{uniformly on $[0,T]$}
for the transported markers $\{\varphi_k(t,\cdot)\}$: we have
constants $\delta_T>0$ and $m_T>0$ such that for every $t\in[0,T]$ and every
$i\neq j$,
\begin{equation}\label{eq:nondegen-unif-proof}
|\nabla(\varphi_i-\varphi_j)(t,x)|\ge m_T
\qquad\text{whenever}\qquad
|(\varphi_i-\varphi_j)(t,x)|\le \delta_T
\ \text{ and }\ x\in\mathcal T_{ij}(t).
\end{equation}
(Here $\mathcal T_{ij}(t)$ denotes the transported top--two competition region,
defined analogously to $\mathcal T_{ij}$ at $t=0$).
Fix $t\in[0,T]$ and $x$ such that $\dist(x,\Gamma(t))\ge \delta$.
Let $k=k(t,x)\in\arg\max_\ell \varphi_\ell(t,x)$ and let
$j=j(t,x)\in\arg\max_{\ell\neq k}\varphi_\ell(t,x)$ be a choice of runner--up.
Then $x\in\mathcal T_{kj}(t)$ by definition of the top--two region, and
\[
\Delta(t,x)=\varphi_k(t,x)-\varphi_j(t,x)=:f(t,x)\ge 0.
\]
Let $\Gamma_{kj}(t)=\{f(t,\cdot)=0\}$ be the corresponding tie set.
Since $\Gamma(t)$ is the union of all pairwise tie sets, the assumption
$\dist(x,\Gamma(t))\ge\delta$ implies in particular
\begin{equation}\label{eq:dist-to-kj}
\dist(x,\Gamma_{kj}(t))\ge \delta.
\end{equation}
We claim that
\begin{equation}\label{eq:gap-lb-point}
f(t,x)\ \ge\ \min\{\delta_T,\ m_T\,\delta\}.
\end{equation}
Indeed, if $f(t,x)\ge \delta_T$ then \eqref{eq:gap-lb-point} is immediate.
Assume instead that $f(t,x)<\delta_T$. Let $y\in\Gamma_{kj}(t)$ be a closest
point to $x$ (a minimizer exists since $\Gamma_{kj}(t)$ is closed in $\T^2$).
Set $\ell:=|x-y|=\dist(x,\Gamma_{kj}(t))$, and consider the segment
\[
\gamma(s):=y+s\frac{x-y}{|x-y|},\qquad s\in[0,\ell].
\]
By continuity,
$f(t,\gamma(s))\in[0,f(t,x)]\subset[0,\delta_T]$ for all $s\in[0,\ell]$.
Moreover, since $k$ and $j$ remain the top two competitors along this segment as
long as $f$ stays small, we have $\gamma(s)\in\mathcal T_{kj}(t)$ for all
$s\in[0,\ell]$.
Hence \eqref{eq:nondegen-unif-proof} applies on the segment and yields
$|\nabla f(t,\gamma(s))|\ge m_T$ for $s\in[0,\ell]$.
Writing $n:=\frac{x-y}{|x-y|}$, the fundamental theorem of calculus gives
\[
f(t,x)-f(t,y)=\int_0^\ell \nabla f(t,\gamma(s))\cdot n\,ds.
\]
Since $f(t,y)=0$ and $|\nabla f|\ge m_T$, we obtain
\[
f(t,x)\ \ge\ \int_0^\ell |\nabla f(t,\gamma(s))|\,ds\ \ge\ m_T\,\ell.
\]
Combining with \eqref{eq:dist-to-kj} gives $f(t,x)\ge m_T\delta$, proving
\eqref{eq:gap-lb-point}.
Since $\Delta(t,x)=f(t,x)$ by the choice of runner--up $j$, we conclude that for
every $t\in[0,T]$ and every $x$ with $\dist(x,\Gamma(t))\ge\delta$,
\[
\Delta(t,x)\ \ge\ c_T,
\qquad
c_T:=\min\{\delta_T,\ m_T\delta\}>0.
\]
Taking the infimum in $x$ yields
\begin{equation}\label{eq:cdelta-unif}
c_\delta(t)\ \ge\ c_T
\qquad\text{for all }t\in[0,T],
\end{equation}
and therefore $\inf_{t\in[0,T]}c_\delta(t)\ge c_T>0$.
By Theorem~\ref{conv-of-markers},
\[
\sup_{t\in[0,T]}E_\beta(t)\longrightarrow 0\qquad\text{as }\beta\to\infty,
\]
Hence we can choose $\beta_0$ large enough so that
\[
\sup_{t\in[0,T]}E_\beta(t)\ \le\ \frac{c_T}{4}
\qquad\text{for all }\beta\ge\beta_0.
\]
Then for $\beta\ge\beta_0$, \eqref{eq:cdelta-unif} implies
$c_\delta(t)-2E_\beta(t)\ge c_T/2$ uniformly in $t\in[0,T]$.
Applying Theorem~\ref{2patch} yields, for all $t\in[0,T]$ and all
$x$ with $\dist(x,\Gamma(t))\ge\delta$,
\[
|\omega^\beta(t,x)-\omega(t,x)|
\le
(K-1)\max_{i,j}|c_i-c_j|\,
\exp\!\Bigl(-\beta\frac{c_T}{2}\Bigr).
\]
This complete the proof.
\end{proof}
\section{Appendix}

\begin{proposition}
\label{trans-non}
Let $f_0 \in C^{1,\alpha}(\T^2)$ and let $b \in L^1([0,T]; C^1(\T^2;\R^2))$.
Let $f(t,x)$ be the unique solution of the transport equation
\[
\partial_t f + b\cdot\nabla f = 0,
\qquad
f|_{t=0}=f_0.
\]
Assume that there exist constants $\delta>0$ and $m>0$ such that
\begin{equation}\label{eq:initial-nondeg}
|\nabla f_0(x)| \ge m
\quad \text{for all } x \in \{\,|f_0(x)| \le \delta\,\}.
\end{equation}
Then for all $t\in[0,T]$,
\begin{equation}\label{eq:transported-nondeg}
|\nabla f(t,x)|
\;\ge\;
m\,\exp\!\Bigl(-\int_0^t \|\nabla b(s)\|_{L^\infty(\T^2)}\,ds\Bigr)
\quad
\text{whenever } |f(t,x)| \le \delta.
\end{equation}
In particular, for each $t$, the level set $\{f(t)=0\}$ is, where nonempty, a $C^1$
curve.
\end{proposition}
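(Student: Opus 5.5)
The plan is to pass to Lagrangian coordinates. Since $b\in L^1([0,T];C^1)$, the flow map $X(t,a)$ solving $\partial_t X=b(t,X)$, $X(0,a)=a$, is a $C^1$ diffeomorphism of $\T^2$ for each $t$. The solution is then given explicitly by $f(t,X(t,a))=f_0(a)$, so $f(t,\cdot)=f_0\circ X(t,\cdot)^{-1}$. Two consequences follow at once. First, the sublevel region is transported: $|f(t,x)|\le\delta$ holds if and only if $|f_0(a)|\le\delta$, where $a=X(t,\cdot)^{-1}(x)$. Hence the hypothesis \eqref{eq:initial-nondeg} applies at the Lagrangian label $a$ of any point $x$ in the region of interest.

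Next, we differentiate the identity $f(t,X(t,a))=f_0(a)$ in $a$. This gives
\[
\nabla f_0(a)=(\nabla_a X(t,a))^{\top}\,\nabla f(t,X(t,a)),
\]
and therefore
\[
|\nabla f_0(a)|\le \|\nabla_a X(t,a)\|\,|\nabla f(t,x)|.
\]
The Jacobian $J(t)=\nabla_a X(t,a)$ solves the linearized equation $\partial_t J=\nabla b(t,X)\,J$ with $J(0)=I$. Grönwall's inequality then gives
\[
\|J(t)\|\le \exp\!\Bigl(\int_0^t\|\nabla b(s)\|_{L^\infty}\,ds\Bigr).
\]
Combining this with $|\nabla f_0(a)|\ge m$ yields \eqref{eq:transported-nondeg}. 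Equivalently, one can work along backward characteristics with $\nabla f(t,x)=(\nabla X^{-1})^{\top}\nabla f_0(X^{-1}(x))$ and bound the smallest singular value of $\nabla X^{-1}$ from below by $\|\nabla X\|^{-1}$. In that form the exponent again involves only $\int_0^t\|\nabla b\|_{L^\infty}$.

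For the final statement, $f_0\in C^{1,\alpha}$ and $X(t,\cdot)^{-1}\in C^1$, so $f(t,\cdot)\in C^1$. On the zero set we have $|f(t,x)|=0\le\delta$, so $\nabla f(t,x)\neq0$ there. The implicit function theorem then shows that $\{f(t)=0\}$ is locally a $C^1$ embedded curve, and compactness of $\T^2$ makes it a finite union of closed $C^1$ curves.

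The main obstacle is rigor with only $L^1$-in-time regularity of $b$. The ODE is Carathéodory in $t$, and one must justify that $X$ is $C^1$ in $a$, that $J$ satisfies the variational equation a.e. in $t$, and that the transport solution coincides with $f_0\circ X^{-1}$. I would handle this by first treating smooth $b$ and then approximating in $L^1_tC^1_x$. Uniform $C^1$ flow estimates give convergence of $X$ and $\nabla X$, and the bound passes to the limit since it depends only on $\int\|\nabla b\|_{L^\infty}$.
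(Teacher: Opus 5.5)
Your proof is correct and is essentially the paper's argument: both work along characteristics, use $f(t,X(t,a))=f_0(a)$ to transfer the condition $|f|\le\delta$ back to the label $a$, and apply Gr\"onwall with exponent $\int_0^t\|\nabla b(s)\|_{L^\infty}\,ds$. The only difference is that the paper applies Gr\"onwall to a lower bound for $|\nabla f(t,X(t;a))|$ obtained by differentiating the PDE, while you bound $\|\nabla_a X\|$ from above and invert $\nabla f_0=(\nabla_a X)^{\top}\nabla f$; your route has the small advantage of never differentiating $\nabla f$ (the paper's formal equation contains $(b\cdot\nabla)\nabla f$, which need not exist for $C^{1,\alpha}$ data).
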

\begin{proof}
Let $X(t;a)$ denote the flow map associated with $u$, i.e.
\[
\frac{d}{dt}X(t;a)=b(t,X(t;a)),\qquad X(0;a)=a.
\]
Since $f$ solves the transport equation, we have
\[
f(t,X(t;a))=f_0(a)
\qquad\text{for all } t\in[0,T].
\]
Consequently,
\[
|f(t,x)|\le\delta
\quad\Longleftrightarrow\quad
|f_0(a)|\le\delta,
\qquad x=X(t;a).
\]
Differentiating the transport equation in space yields
\[
\partial_t \nabla f + (b\cdot\nabla)\nabla f
= -(\nabla b)^{T}\nabla f.
\]
Along trajectories,
\[
\frac{d}{dt}\nabla f(t,X(t;a))
=-(\nabla b)^{T}(t,X(t;a))\,\nabla f(t,X(t;a)).
\]
Taking norms gives
\[
\frac{d}{dt}|\nabla f(t,X(t;a))|
\ge
-\|\nabla b(t)\|_{L^\infty(\T^2)}\,|\nabla f(t,X(t;a))|.
\]
By Gr\"onwall's inequality,
\[
|\nabla f(t,X(t;a))|
\ge
|\nabla f_0(a)|
\exp\!\Bigl(-\int_0^t \|\nabla b(s)\|_{L^\infty(\T^2)}\,ds\Bigr).
\]
Now let $x$ satisfy $|f(t,x)|\le\delta$, and write $x=X(t;a)$.
Then $|f_0(a)|\le\delta$, and by the assumption \eqref{eq:initial-nondeg},
\[
|\nabla f_0(a)|\ge m.
\]
Substituting into the inequality above yields \eqref{eq:transported-nondeg}.
\end{proof}
\begin{corollary}
\label{prop:transport_nondegeneracy}
Let $\{\varphi_k^{\emph{in}}\}_{k=1}^K \subset C^{1,\alpha}(\T^2)$ satisfy the
nondegeneracy condition \eqref{nondegen} with constants $(\delta,m)$.
Let $\varphi_k^\beta$ solve
\[
(\partial_t+u^\beta\cdot\nabla)\varphi_k^\beta=0 \qquad \text{on } [0,T].
\]
Then there exists $m_\beta>0$ such that for every $i\neq j$ and all $t\in[0,T]$,
\[
|\nabla(\varphi_i^\beta-\varphi_j^\beta)(t,x)|\ge m_\beta
\quad\text{whenever}\quad
|(\varphi_i^\beta-\varphi_j^\beta)(t,x)|\le\delta.
\]
Consequently, for each $t\in[0,T]$, the interface
\[
\Gamma_{ij}^\beta(t):=\{x\in\T^2:\varphi_i^\beta(t,x)=\varphi_j^\beta(t,x)\}
\]
is, where nonempty, a $C^1$ curve.
\end{corollary}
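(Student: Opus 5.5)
The plan is to deduce the statement from Proposition~\ref{trans-non}, applied pairwise to the differences $f^\beta_{ij}:=\varphi_i^\beta-\varphi_j^\beta$. The reason is structural: by Theorem~\ref{close1}, all markers $\varphi_k^\beta$ are transported by the same velocity field $u^\beta$. Hence each difference solves
\[
\partial_t f^\beta_{ij}+u^\beta\cdot\nabla f^\beta_{ij}=0,
\qquad
f^\beta_{ij}|_{t=0}=\varphi_i^{\rm in}-\varphi_j^{\rm in}\in C^{1,\alpha}(\T^2).
\]

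\textbf{Step 1: regularity of the velocity.} We check that $b:=u^\beta\in L^1([0,T];C^1(\T^2;\R^2))$ for each fixed $\beta$. The initial vorticity $\omega_0^\beta=\sum_k c_k\pi_k^\beta$ is a smooth function of the $C^{1,\alpha}$ markers, so $\omega_0^\beta\in C^{1,\alpha}\subset C^{0,\alpha}$. The classical Wolibner--Hölder theory of the 2D Euler equation then propagates $\omega^\beta\in L^\infty([0,T];C^{0,\alpha})$. By Schauder estimates for the Biot--Savart law, $\nabla u^\beta\in L^\infty([0,T];C^{0,\alpha})$. In particular
\[
L_\beta:=\int_0^T\|\nabla u^\beta(s)\|_{L^\infty}\,ds<\infty.
\]
This constant depends on $\beta$, since $\|\omega_0^\beta\|_{C^{0,\alpha}}$ grows roughly like $\beta$. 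That dependence is harmless here, because the statement only asks for some $m_\beta>0$.

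\textbf{Step 2: the initial hypothesis.} Proposition~\ref{trans-non} needs $|\nabla f_0|\ge m$ on the whole strip $\{|f_0|\le\delta\}$. Condition \eqref{nondegen}, however, only gives this on the strip intersected with the top-two region $\mathcal T_{ij}$. This mismatch is the main obstacle.

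The first approach is to read \eqref{nondegen} in the stronger form with $\mathcal T_{ij}$ omitted, so that it holds on the full strip. This is how it is actually used in the proof of Theorem~\ref{hau2} and in Corollary~\ref{prop:transport_nondegeneracy}, whose conclusion is also stated on the full strip. Under that reading, Proposition~\ref{trans-non} applies directly.

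If one insists on the localized version, there is a fallback. One tracks $\mathcal T_{ij}$ along the flow instead. Since all markers are transported by the same $X^\beta$, we have $X^\beta(t)(\mathcal T_{ij})=\mathcal T^\beta_{ij}(t)$. The Grönwall argument in the proof of Proposition~\ref{trans-non} is purely Lagrangian, along single trajectories. It therefore yields the bound on $\{|f^\beta_{ij}|\le\delta\}\cap\mathcal T^\beta_{ij}(t)$. This is the natural localized analogue of the conclusion.

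\textbf{Step 3: conclusion.} Proposition~\ref{trans-non} (or its Lagrangian variant from Step 2) gives, whenever $|f^\beta_{ij}(t,x)|\le\delta$,
\[
|\nabla f^\beta_{ij}(t,x)|\ge m\,e^{-L_\beta}.
\]
We set $m_\beta:=m\,e^{-L_\beta}$, which is uniform over pairs and over $t\in[0,T]$.

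For the curve statement, note that $f^\beta_{ij}(t,\cdot)=f^\beta_{ij}(0,\cdot)\circ X^\beta(t)^{-1}$ is $C^1$, because the flow map is a $C^1$ diffeomorphism (since $u^\beta\in C^1$). Its gradient does not vanish on the zero set, which lies inside the strip. The implicit function theorem then shows that $\Gamma^\beta_{ij}(t)$ is locally the graph of a $C^1$ function, i.e.\ a $C^1$ curve wherever it is nonempty.
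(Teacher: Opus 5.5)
Your proposal is correct and follows the paper's argument: apply Proposition~\ref{trans-non} to $\varphi_i^\beta-\varphi_j^\beta$. It works because $\omega_0^\beta\in C^{1,\alpha}$ gives $\nabla u^\beta\in L^1([0,T];L^\infty)$, so $m_\beta=m\,e^{-L_\beta}$ does the job. The mismatch you flag between the localized condition~\eqref{nondegen} and the full-strip hypothesis of Proposition~\ref{trans-non} is genuine. The paper passes over it silently and effectively uses the full-strip reading; indeed, at $t=0$ the stated conclusion is exactly full-strip nondegeneracy of the initial data, which is strictly stronger than~\eqref{nondegen}. So your Lagrangian fallback, which gives the bound on the transported region $X^\beta(t)(\mathcal T_{ij})$, is the right localized substitute.
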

\begin{proof}
The result follows by applying the general persistence of nondegeneracy for
transported scalars to $\varphi_i^\beta-\varphi_j^\beta$, which solves a linear
transport equation with velocity $u^\beta$.
Since the initial vorticity belongs to $C^{1,\alpha}$, the associated velocity
field satisfies $\nabla u^\beta \in L^1([0,T];L^\infty(\T^2))$ for every finite
$T>0$, ensuring that the lower bound propagates in time.
\end{proof}

\paragraph{Acknowledgments.}
Trinh T.~Nguyen’s research is supported by the School of Mathematical Sciences at Shanghai Jiao Tong University, the NSFC Excellent Young Scientists Fund (Overseas), and the Shanghai BYL Talent Program.

\bibliographystyle{abbrv}
\def\cprime{$'$} \def\cprime{$'$}

\end{document}